
\documentclass[12pt,twoside]{article}
\usepackage{latexsym, amssymb,amsthm,amsmath,xypic,lscape,epsfig,setspace,tikz,slashed}
\usepackage[retainorgcmds]{IEEEtrantools}
\usetikzlibrary{decorations.markings}
 \usetikzlibrary{arrows}
\usepackage[pdftex]{hyperref}
\usepackage[margin=10pt,font=small,labelfont=bf]{caption}

\setlength{\headheight}{0in} \setlength{\topmargin}{0in}
\setlength{\evensidemargin}{0in} \setlength{\oddsidemargin}{0in}
\setlength{\textwidth}{6.75 in} \setlength{\textheight}{8.5in}

\theoremstyle{plain}
\newtheorem{theorem}{Theorem}[section]
\newtheorem{lemma}[theorem]{Lemma}

\theoremstyle{definition}
\newtheorem{definition}[theorem]{Definition}
\AtBeginEnvironment{definition}{%
  \pushQED{\qed}%
}
\AtEndEnvironment{definition}{\popQED\endexample}

\newtheorem{corollary}[theorem]{Corollary}


\newtheorem{example}[theorem]{Example}
\AtBeginEnvironment{example}{%
  \pushQED{\qed}%
}
\AtEndEnvironment{example}{\popQED\endexample}

\newenvironment{renumerate}%
{%
\begin{enumerate}}%
{\end{enumerate}%
}%

\newenvironment{remark}%
{\vskip6pt%
\noindent%
{\it Remark.}}%
{\vskip6pt}

{\vskip6pt%
\noindent%
{\it Remarks}. %
\begin{renumerate}}%
{\end{renumerate}\vskip6pt}

\makeatletter
\def\Ddots{\mathinner{\mkern1mu\raise\p@
\vbox{\kern7\p@\hbox{.}}\mkern2mu
\raise4\p@\hbox{.}\mkern2mu\raise7\p@\hbox{.}\mkern1mu}}
\makeatother

\newcommand{\Z}{\text{$\mathbb Z$}}

\renewcommand{\tilde}{\widetilde}

\newcommand{\A}{\text{$\mathcal{A}$}}

\newcommand{\U}{\mathrm{U}}
\newcommand{\Clif}{\mathrm{Clif}}

\newcommand{\gf}{\text{$\varphi$}}

\newcommand{\Id}{\mathrm{Id}}

\newcommand{\del}{\text{$\partial$}}

\newcommand{\tensor}{\otimes}

\newcommand{\End}{\mathrm{End}\,}

\newcommand{\mc}[1]{\text{$\mathcal{#1}$}}

\newcommand{\noqed}{\let\qed\relax}

\newcommand{\IP}[1]{\langle #1 \rangle}

\newcommand{\Cour}[1]{[\![#1]\!]}

\date{} \usepackage{color} \definecolor{tocolor}{rgb}{.1,.1,.5}
\definecolor{urlcolor}{rgb}{.2,.2,.6}
\definecolor{linkcolor}{rgb}{.1,.1,.6}
\definecolor{citecolor}{rgb}{.6,.2,.1}
\hypersetup{backref=true, colorlinks=true, urlcolor=urlcolor,
  linkcolor=linkcolor, citecolor=citecolor}

\begingroup
\hypersetup{linkcolor=blue}

\endgroup


\errorcontextlines=0
\setcounter{secnumdepth}{3}

\numberwithin{equation}{section}

\begin{document}

\title{T-duality for transgressive fibrations}
\author{Gil R. Cavalcanti\thanks{{\tt gil.cavalcanti@gmail.com}, Department of Mathematics, Universiteit Utrecht, Utrecht, The Netherlands.}}
\maketitle

\abstract{We extend the notion of topological T-duality from oriented sphere bundles to transgressive fibrations, a more general type fibration characterised by the abundance of transgressive elements. Examples of transgressive fibrations include principal $\mathrm{U}(n)$-bundles therefore our notion of T-duality belongs to the realm of non-Abelian T-duality. We prove that transgressive T-duals have isomorphic twisted cohomology. We then introduce Clifford--Courant algebroids, show that one can assign such an algebroid to a transgressive fibration and that transgressive T-duals have isomorphic Clifford--Courant algebroids. We provide several examples illustrating different properties of T-dual spaces.}
\vskip12pt
\noindent
MSC classification 2020: 	55R25, 57R22, 57R19, 57N65.\\
Subject classification: Differential Geometry, Differential Topology.\\
Keywords: T-duality, principal bundles, fiber bundles, twisted cohomology, Courant algebroids.\\

\tableofcontents

\section{Introduction}

Target space duality, or just T-duality, is a concept that comes from physics. In its original form, it concerns string theory when there is an $S^1$-symmetry in the target space. In this case, there is another model for an equivalent physical theory in a different space called the {\it T-dual}. This idea was first worked out in 87 by Buscher \cite{Bus87} and became so central in string theory that in general one simply expects new physical theories to be invariant under T-duality in one form or another. 

Because the circle is an Abelian group, this original T-duality is often referred to as Abelian T-duality and there is a lot of interest in understanding different aspects of this duality. Yet models with a non-Abelian group of symmetries arise frequently and simply restricting to a maximal torus overlooks significantly the full symmetry of the situation. Many attempts have been made to produce a general framework for non-Abelian T-duality \cite{DELAOSSA1993377,Giveon:1993ai,KlSe95,VonUnge:2002xjf} and this quest remains a relevant research question today.

Mathematically the notion of T-duality was encoded in two different approaches: Poisson-Lie duality \cite{KlSe95} and topological T-duality \cite{BEM04}. The Poisson--Lie framework seems to match well the physics it proposes to model and does not require Abelian group actions. Its down side is that it is not constructive. Topological T-duality also matches well the physical theories in the case of an Abelian group of symmetries and was very successful because it  is constructive and the construction makes clear the interplay between string fluxes and the topology of T-dual spaces.

Hallmarks of topological T-duality are that T-dual spaces have isomorphic twisted cohomologies, twisted K-theories \cite{BEM04} and associated Courant algebroids \cite{CG11}. The first two properties are not shared with Poisson--Lie duality. A different extension of topological T-duality, known as {\it spherical T-duality}, which removes the requirement of Abelian symmetries was proposed in \cite{BEM15} and further developed in \cite{BEM15b,BEM18,LSW16,CHU24}. Despite missing even the existence of a symmetry group, spherical T-dual spaces have  isomorphic twisted cohomologies, higher twisted K-theories and associated extended higher Courant algebroids. These results make spherical T-duality a mathematically relevant relation between spaces both from the algebraic topological and differential geometric view points. Further,  there are indications that specific examples of spherical T-duals are relevant in string theory.

Arguably the place where spherical T-duality first diverges from Poisson--Lie is that in Poisson--Lie the symmetries of T-dual spaces are dual groups, while spherical T-dual spaces have fibers with dual cohomology. This is the point of view we will take and extend in this paper. We will introduce the notion of transgressive fibrations (Definition \ref{def:transgressive fibration}) which include fibrations whose fibers can be more interesting than spheres and in fact capture important examples such as  the full or any partial frame bundle of Hermitian vector bundle. We introduce the notion of T-duality for transgressive fibrations (Definition \ref{def:T-duality}) and prove that once again one has isomorphisms of twisted cohomology (Theorem \ref{theo:iso1}). We then introduce the Clifford--Courant algebroids, a Courant algebroid-like object compatible with this version of T-duality. After studying the most basic properties of Clifford--Courant algebroids we prove that T-dual spaces have isomorphic algebroids (Theorem \ref{theo:iso2}). We then move on to give several examples of T-dual spaces, which show that the definition leads to concrete constructions.

\section{Transgressive bundles}

In this section we introduce the objects to which we will extend T-duality, namely transgressive fibrations. Throughout the paper, for simplicity we assume that our spaces are smooth manifolds and work with de Rham cohomology, though much of the theory can be carried out for topological manifolds and integral singular cochains. First we recall the definition of a transgressive cohomology class.

\begin{definition}
Given a fibration $F \cdots E \stackrel{\pi}{\longrightarrow}M$, a cohomology class $A \in H^\bullet(F)$ is {\it transgressive} if  there is a form $\psi \in \Omega^\bullet(E)$ such that
\begin{itemize}
\item $[\iota^*\psi] =A$ (where $\iota\colon F \to E$ is the inclusion of a reference fiber) and
\item $d\psi = \pi^* c$ for some $c \in \Omega^\bullet(M)$.
\end{itemize}
\end{definition}

This leads directly to the concept of transgressive fibration:

\begin{definition}\label{def:transgressive fibration}
A fibration $F\cdots E\stackrel{\pi}{\longrightarrow} M$ is \emph{transgressive} if the cohomology algebra $H^\bullet(F)$ is generated by transgressive elements.
An \emph{odd transgressive fibration} is a transgressive fibration whose fibers have the same cohomology algebra of a product of odd spheres.
\end{definition}

Odd transgressive fibrations are arguably very exceptional and one might wonder how frequently they appear. Fortunately, they arise naturally as fibrations associated to vector bundles.

\begin{example}
Given a Hermitian vector bundle $V \to M$ of rank $n$, let $E_k \stackrel{\pi}{\longrightarrow} M$ be the bundle of unitary $k$-vectors on $V$. Then the associated $\U(n)$-bundle of $V$ acts on $E_k$ with isotropy $\U(n-k)$, so $E_k$ is a fibration
\[\U(n)/\U(n-k)\cdots E_k \to M.\]
The cohomology of  $\U(n)/\U(n-k)$ is generated by single generators in degrees $2(n-k)+1, \dots, 2n-1$ and there are forms $\psi_{2j-1}\in \Omega^{2j-1}(E_k)$ whose restriction to the fiber represents a  generator of $H^{2j-1}(F)$ and such that $d \psi_{2j-1} = \pi^*c_{j}$, where $c_{j}$ is a representative of the $j^{th}$ Chern class of $V$.

It is worth mentioning the extremal cases. The bundle $E_1 \to M$ is the sphere bundle of $E$ and $\psi_{2n-1}$ is often called a {\it global angular form}  of $E_1$. The bundle $E_n\stackrel{\pi}{\longrightarrow}{M}$ is the frame bundle of $V$ and the existence of the forms $\psi_{2j-1}$ above is a manifestation of the fact that $\pi^*V \to E_n$ is the trivial bundle, hence all the Chern classes of $V$ become exact when pulled back to the frame bundle.

For the general case, the existence of the forms $\psi_{2j-1}$ above are related to the interpretation of the Chern classes  $c_{n-k+1}, \cdots, c_n$ as obstructions to the existence of a global $k$-unitary set on $V$ (a set which exists tautologically on $\pi^*V \to E_k$).
\end{example}

\begin{definition}
Let $E \to M$ be an odd transgressive fibration. A {\it transgressive generating set} is a collection of forms $\psi_{2i-1}^j \in \Omega^{2i-1}(E)$ such that
\begin{itemize}
\item $\Psi = \{ \psi_{2i-1}^j:j=1,\dots, n_i, \deg(\psi_{2i-1}^j) = 2i-1 \}$ freely generates the integral cohomology of the fiber
\item $d\psi_{2i-1}^j = \pi^*c_{2i}^j$.
\end{itemize}
The complex
\[\Omega_{\Psi}^\bullet(M) =  \wedge^{\bullet}\IP{\Psi}\tensor\Omega^\bullet(M) \subset \Omega^{\bullet}(E)\]
is the {\it transgressive subcomplex}.
\end{definition}

A transgressive subcomplex already contains all cohomological information of $E$:

\begin{lemma}\label{lem:quasi-iso1}
Given an odd transgressive fibration $F\cdots E \to M$ with a transgressive generating set, $\Psi$, the exterior derivative preserves $\Omega_\Psi$ and the inclusion
\[ (\Omega_\Psi,d) \to (\Omega(E),d)\]
is a quasi-isomorphism. 
\end{lemma}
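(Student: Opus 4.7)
The first claim is immediate. For any generator $\psi = \psi_{2i-1}^j$ we have $d\psi = \pi^*c_{2i}^j \in \pi^*\Omega^\bullet(M) \subset \Omega_\Psi$, and for any pulled-back form $d(\pi^*\alpha) = \pi^*d\alpha \in \Omega_\Psi$; the Leibniz rule then extends this to show that $d$ preserves the algebra generated by $\Psi$ and $\pi^*\Omega^\bullet(M)$, namely $\Omega_\Psi$.

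For the quasi-isomorphism my plan is to compare two spectral sequences. Filter $\Omega_\Psi$ by the number of $\psi$-factors: let $F_p$ be the subspace spanned by terms with at most $p$ factors from $\Psi$. The differential splits as $d_0 + d_1$, where $d_0(\psi_I \wedge \pi^*\alpha) = \pm \psi_I \wedge \pi^*d\alpha$ preserves the $\psi$-count and $d_1(\psi_I \wedge \pi^*\alpha) = d\psi_I \wedge \pi^*\alpha$ strictly decreases it by one. Hence the filtration is stable, the $E_1$ page is $\wedge^\bullet\IP{\Psi} \otimes H^\bullet(M)$, and the induced $d_1$ is the Koszul-type derivation $\psi_{2i-1}^j \mapsto [c_{2i}^j]$. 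Since $d$ has no further components the spectral sequence collapses at $E_2$ and $H^\bullet(\Omega_\Psi)$ equals the cohomology of this Koszul complex. The same Koszul cohomology arises as $E_\infty$ of the de Rham Leray--Serre spectral sequence of $\pi : E\to M$: its $E_2$ page is $H^\bullet(M) \otimes H^\bullet(F) = H^\bullet(M) \otimes \wedge^\bullet\IP{\iota^*\Psi}$, and the transgression of each generator $[\iota^*\psi_i]$ equals $[c_i]$, witnessed on the nose by the primitive $\psi_i$ with $d\psi_i = \pi^*c_i$. Because $H^\bullet(F)$ is freely generated by transgressive classes, multiplicativity forces all nontrivial higher differentials to be determined by these transgressions, so $E_\infty$ is again the same Koszul cohomology.

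To upgrade the abstract equality $H^\bullet(\Omega_\Psi) \cong H^\bullet(E)$ to an isomorphism induced by the inclusion, I would equip $\Omega(E)$ with the horizontal-degree filtration that computes the de Rham Serre spectral sequence and verify that $\Omega_\Psi \hookrightarrow \Omega(E)$ is filtered and induces on $E_1$-pages the tautological map of Koszul data. The comparison theorem for spectral sequences then delivers the quasi-isomorphism. The main obstacle is precisely this matching of filtrations: in a local trivialization each $\psi_i$ decomposes into vertical and horizontal parts, and one must check that the horizontal corrections do not alter the induced map on associated graded pieces. An alternative route that sidesteps this comparison is induction on $|\Psi|$ via a Gysin-type long exact sequence, peeling off one transgressive generator at a time and reducing to the acyclic-fiber base case.
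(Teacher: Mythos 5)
Your overall strategy---compare a spectral sequence for $\Omega_\Psi$ against the Serre spectral sequence of $\pi$ and invoke the comparison theorem---is the same as the paper's, and the first claim (closure of $\Omega_\Psi$ under $d$) is fine. But the execution has two genuine problems. First, the assertion that the spectral sequence of the $\psi$-count filtration ``collapses at $E_2$ since $d$ has no further components'' is false: even when $d=d_0+d_1$ with $d_1$ lowering the count by exactly one, the differentials $d_r$ for $r\ge 2$ are built from zig-zags and need not vanish. Already with two generators, if $[c_1a]=[c_2a]=0$ in $H^\bullet(M)$, say $c_ia=db_i$, then $d_2[\psi_1\psi_2\otimes a]$ is represented by $\pm(c_2b_1-c_1b_2)$, a Massey product $\IP{c_1,a,c_2}$ that is generally nonzero. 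So $H^\bullet(\Omega_\Psi)$ is \emph{not} the cohomology of the Koszul complex $(\wedge^\bullet\IP{\Psi}\otimes H^\bullet(M),\ \psi_i\mapsto[c_i])$, and the parallel claim that $E_\infty$ of the Serre spectral sequence ``is again the same Koszul cohomology'' is equally unjustified. Second, and more seriously, the step you yourself flag as ``the main obstacle'' is precisely the content of the lemma and is left unresolved: the $\psi$-count filtration on $\Omega_\Psi$ is not the restriction of the horizontal-degree filtration on $\Omega(E)$ (the $\psi_i$ are not vertical forms), so with your choice of filtrations the inclusion is not a filtered map and the comparison theorem does not apply as stated.

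Both problems disappear if you instead filter $\Omega_\Psi$ by \emph{basic degree}, $F^p\Omega_\Psi=\wedge^\bullet\IP{\Psi}\tensor\Omega^{\ge p}(M)$, which is the filtration the paper introduces immediately after this lemma. Since a form $\psi_I\wedge\pi^*\alpha$ with $\alpha\in\Omega^p(M)$ has at least $p$ horizontal legs, the inclusion $\Omega_\Psi\hookrightarrow\Omega(E)$ is automatically compatible with the Serre filtration; both terms of $d$ strictly raise basic degree, so the first page of the $\Omega_\Psi$-side is $\wedge^\bullet\IP{\Psi}\tensor\Omega^\bullet(M)$ with $d_1$ the de Rham differential on coefficients, matching $\Omega^\bullet(M;\mc{H}^\bullet(F))$ on the Serre side because $\iota^*\Psi$ freely generates $H^\bullet(F)$ (one also needs, as the paper notes, that $\pi_1(M)$ acts trivially on $H^\bullet(F)$, which the transgressive generating set guarantees). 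The comparison theorem then gives the quasi-isomorphism with no collapse statement needed. Your alternative route---induction on $|\Psi|$ via Gysin-type sequences, peeling off one generator at a time---would also work and avoids spectral sequences entirely, but as written it is only a one-line sketch.
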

\begin{proof}
It follows from the definition of transgressive generating set that $\Omega_\Psi $ is preserved by the exterior derivative and hence is indeed a subcomplex. Further, the existence of a transgressive generating set implies that the fundamental group of $M$ acts trivially on the cohomology of $F$.  The cohomology of  $(\Omega(E),d)$ is then computed by the Serre spectral sequence whose first page is isomorphic to $\Omega_{\Psi}$. A similar spectral sequence computes the cohomology of $(\Omega_\Psi,d)$ and we have that the inclusion $(\Omega_\Psi,d) \to (\Omega(E),d)$ induces an isomorphism of the their first pages, hence these two complexes have the same cohomology.
\end{proof}

In this paper we will be interested in the twisted cohomology of spaces, that is the cohomology of $d+H\wedge$, where $H$ is a closed odd form, possibly of mixed degree, with lowest degree component of degree at least 3. Twisted cohomology only depends on the cohomology class of $H$, so, in the presence of a transgressive generating set, due to the previous lemma, we may assume that the closed form in question lies in $\Omega_\Psi$. It then turns out that the twisted cohomology can also be computed directly in the transgressive subcomplex.

\begin{lemma}
Given an odd transgressive fibration $F\cdots E \to M$ with a transgressive generating set, $\Psi$, and a closed odd element $H \in \Omega_{\Psi} \subset \Omega(E)$, the twisted exterior derivative preserves $\Omega_\Psi$ and the inclusion
\[ (\Omega_\Psi,d^H) \to (\Omega(E),d^H)\]
is a quasi-isomorphism. 
\end{lemma}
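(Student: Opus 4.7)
My approach is to use the spectral sequence associated to the filtration by total form degree, exploiting the fact that both $d$ and $H\wedge$ strictly raise form degree.

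Preservation of $\Omega_\Psi$ under $d^H = d + H\wedge$ is immediate: Lemma \ref{lem:quasi-iso1} shows that $d$ preserves $\Omega_\Psi$, and $H\wedge$ preserves it because $\Omega_\Psi = \wedge\langle\Psi\rangle\tensor\Omega(M)$ is a subalgebra of $\Omega(E)$ containing $H$.

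For the quasi-isomorphism, I would filter both complexes by decreasing form degree, $F^p = \bigoplus_{k\geq p}\Omega^k$. This filtration is preserved by $d^H$ on both sides: the operator $d$ shifts form degree by $1$, while $H\wedge$ shifts it by at least $3$ (the hypothesized bound on the lowest-degree component of $H$). The inclusion $\Omega_\Psi\hookrightarrow\Omega(E)$ is then filtered. In the resulting spectral sequences, the shift-$0$ part of $d^H$ is zero, so $d_0 = 0$ and $E_1 = \Omega^\bullet$; the first nontrivial differential is $d_1 = d$, giving $E_2 = H^\bullet_{dR}$ on each side. Lemma \ref{lem:quasi-iso1} identifies these $E_2$ pages: the inclusion induces an isomorphism $E_2(\Omega_\Psi)\cong E_2(\Omega(E))$. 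For $r\geq 2$ the differential $d_r$ is induced by the form-degree-$r$ component of $H\wedge$ (namely $H_r\wedge$ for odd $r\geq 3$, and zero otherwise), possibly with secondary corrections from compositions of lower-order operators; crucially these corrections agree on both sides because $\Omega_\Psi$ is a sub-DGA of $\Omega(E)$ closed under multiplication by every homogeneous piece of $H$. The spectral sequence comparison theorem thus yields isomorphisms on all subsequent pages. Because $E$ is a finite-dimensional manifold the filtration is bounded, so the spectral sequences converge strongly to the $\Z/2$-graded cohomologies of $d^H$, yielding the desired quasi-isomorphism.

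The step I expect to be most delicate is the compatibility of higher-page differentials under the inclusion, since $d^H$ is not homogeneous in form degree but rather a sum of odd-degree-shifting operators. The hypothesis that the lowest-degree component of $H$ has degree at least $3$ is essential here: it ensures $d_2 = 0$ (so the iso on $E_2$ is not obstructed), and guarantees that the filtration by form degree cleanly separates $d$ from $H\wedge$, making standard filtered-complex comparison arguments applicable.
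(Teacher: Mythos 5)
Your proposal is correct and follows essentially the same route as the paper: both filter the twisted complex so that the first nontrivial page of the resulting spectral sequence is ordinary de Rham cohomology, invoke Lemma \ref{lem:quasi-iso1} to get an isomorphism there, and conclude by comparison of spectral sequences (the paper describes the higher differentials as Massey products $\IP{H,\dots,H,\bullet}$, which matches your ``$H\wedge$ plus secondary corrections'' description). Your write-up is in fact somewhat more explicit about the filtration and convergence than the paper's.
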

\begin{proof}
The decomposition $d^H = d + H\wedge$ with $d^2 =0$ and $H^2=0$ makes both $\Omega_\Psi$ and $\Omega(E)$ into filtered double complexes whose cohomology can then be calculated via a spectral sequence. The first nontrivlal page of these spectral sequences is the usual cohomology and the differential in the higher pages are related to Massey products of the form $\IP{H,\dots,H,\bullet}$. Since by Lemma \ref{lem:quasi-iso1} the inclusion $(\Omega_\Psi,d) \to (\Omega(E),d)$ induces an isomorphism in cohomology, the inclusion $(\Omega_\Psi,d^H) \to (\Omega(E),d^H)$ induces an isomorphism of their first pages and hence an isomorphism of their last page.
\end{proof}

We finish this section on transgressive fibrations with a simple observation
\begin{lemma}
Let $E \to M$ and $\hat{E}\to M$ be two odd transgressive fibrations and $\Psi$ and $\hat{\Psi}$ be transgressive generating sets. Then $E \times _M \hat{E} \to M$ is an odd transgressive fibration with transgressive generating set  $p^*\Psi \cup \hat{p}^*\hat\Psi$, where $p$ and $\hat{p}$ are the natural projections to $E$ and $\hat{E}$ respectively.
\end{lemma}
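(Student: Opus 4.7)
The plan is to verify the two defining conditions of a transgressive generating set for the collection $p^*\Psi\cup\hat p^*\hat\Psi$ on the fibered product $E\times_M\hat E\to M$. Write $F$ and $\hat F$ for the fibers of $E$ and $\hat E$; the fiber of $E\times_M\hat E$ over any point $m\in M$ is canonically identified with $F\times\hat F$, and the fiber inclusion $\iota\colon F\times\hat F\hookrightarrow E\times_M\hat E$ satisfies $p\circ\iota=\iota_F\circ\mathrm{pr}_1$ and $\hat p\circ\iota=\iota_{\hat F}\circ\mathrm{pr}_2$, where $\mathrm{pr}_1,\mathrm{pr}_2$ are the projections of $F\times\hat F$ and $\iota_F,\iota_{\hat F}$ are the fiber inclusions into $E$ and $\hat E$.

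First I would apply the Künneth theorem to identify $H^\bullet(F\times\hat F)\cong H^\bullet(F)\otimes H^\bullet(\hat F)$. Since by hypothesis the classes $[\iota_F^*\psi]$ for $\psi\in\Psi$ freely generate $H^\bullet(F)$ as an algebra and similarly for $\hat\Psi$, the classes $\mathrm{pr}_1^*[\iota_F^*\psi]$ and $\mathrm{pr}_2^*[\iota_{\hat F}^*\hat\psi]$ freely generate $H^\bullet(F\times\hat F)$. The identities above give $\iota^*(p^*\psi)=\mathrm{pr}_1^*\iota_F^*\psi$ and $\iota^*(\hat p^*\hat\psi)=\mathrm{pr}_2^*\iota_{\hat F}^*\hat\psi$, so $p^*\Psi\cup\hat p^*\hat\Psi$ restricts to a free set of generators of the fiber cohomology. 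Because all forms involved are of odd degree, $F\times\hat F$ still has the cohomology algebra of a product of odd spheres.

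Second, for the transgression condition, if $d\psi=\pi^*c$ then by naturality of $d$,
\[d(p^*\psi)=p^*\pi^*c=(\pi\circ p)^*c,\]
and $\pi\circ p$ is precisely the bundle projection $E\times_M\hat E\to M$; the same computation applies to $\hat p^*\hat\psi$. Hence every element of $p^*\Psi\cup\hat p^*\hat\Psi$ has exterior derivative pulled back from $M$, and combined with the previous paragraph this shows $p^*\Psi\cup\hat p^*\hat\Psi$ is a transgressive generating set for the odd transgressive fibration $E\times_M\hat E\to M$. No substantive obstacle is expected: the statement is a direct consequence of Künneth together with naturality of pullback, with care needed only to track the various inclusions and projections.
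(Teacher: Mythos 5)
Your proof is correct. The paper itself states this lemma as ``a simple observation'' and offers no proof, so there is nothing to diverge from; your argument --- identifying the fiber of $E\times_M\hat E$ with $F\times\hat F$, invoking K\"unneth to see that the restricted classes freely generate $H^\bullet(F\times\hat F)$ as an exterior algebra on odd generators, and using naturality of $d$ and of pullback for the transgression condition --- is exactly the verification the author is implicitly leaving to the reader.
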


\section{T-duality for transgressive bundles}

\subsection{Definition of T-duality}

In this section we introduce the notion of T-duality for odd transgressive fibrations. To phrase the notion of nondegeneracy we need one final concept.

\begin{definition}
Let $E\to M$ be an odd transgressive fibration and $\Psi$ a transgressive generating set. An element in $\Omega_\Psi$ has \emph{basic-degree $k$} if it lies in  $\wedge^\bullet\IP{\Psi}\tensor \Omega^k(M)$.   
\end{definition}
Notice that the exterior derivative  is compatible with the filtration by basic-degree:
\[d \colon\wedge^\bullet\IP{\Psi} \tensor  \Omega^k(M)  \to\wedge^\bullet\IP{\Psi} \tensor ( \oplus_{j>k}\Omega^j(M)) ,\]
and if we project to the basic-degree $k+1$ component, 
\[ \Pi_{k+1}\circ d \colon \wedge^\bullet\IP{\Psi} \tensor  \Omega^k(M) \to \wedge^\bullet\IP{\Psi} \tensor  \Omega^{k+1}(M),\]
the effect is to just take the exterior derivative of the coefficients.

\begin{definition}\label{def:T-duality}
Two odd transgressive fibrations $(E,H)\stackrel{\pi}{\longrightarrow} M$ and $(\hat{E},\hat{H})\stackrel{\hat\pi}{\longrightarrow} M$ with transgressive generating sets $\Psi$ and $\hat\Psi$ are \emph{T-dual} if there is a form $F \in \Omega_{\Psi\cup\hat{\Psi}}$ such that
\begin{enumerate}
\item (gerbe trivialization)
\[dF = p^*H - \hat{p}^*\hat{H}\]
\item (nondegeneracy)\[\tau_{F_0}:= \hat{p}_*e^{F_0}p^*\colon \wedge \IP{\Psi} \tensor \Omega^0(M)   \to  \wedge \IP{\hat\Psi}\tensor \Omega^0(M)\]
 is a fiberwise isomorphism,
\end{enumerate}
where $p\colon E \times _M\hat{E}\to E$ and $\hat{p}\colon E \times _M\hat{E}\to \hat{E}$ are the natural projections and $F_0$ is the basic-degree zero component of $F$. The form $F$ is called the \emph{T-duality kernel}.
\end{definition}
The definition of T-dual spaces above seems to depend on the particular choices of  transgressive generating sets. This is not the case.
\begin{lemma}
If $(E,H)\stackrel{\pi}{\longrightarrow} M$ and $(\hat{E},\hat{H})\stackrel{\hat\pi}{\longrightarrow} M$ are T-dual for one choice of transgressive generating sets, then they are T-dual for any choice generating the same transgressive complex.
\end{lemma}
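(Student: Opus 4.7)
The plan is to verify that both conditions in Definition \ref{def:T-duality} are intrinsic to $F$ once the complex $\Omega_{\Psi \cup \hat\Psi}$ is fixed. By hypothesis $\Omega_\Psi = \Omega_{\Psi'}$ and $\Omega_{\hat\Psi} = \Omega_{\hat\Psi'}$, so $\Omega_{\Psi \cup \hat\Psi} = \Omega_{\Psi' \cup \hat\Psi'}$ and $F$ automatically lies in the new complex. The gerbe trivialization $dF = p^*H - \hat{p}^*\hat{H}$ makes no reference to the generating sets, so condition (1) transfers immediately.

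The subtlety lies entirely in condition (2), because the basic-degree filtration does depend on the generating set; the basic-degree-zero component $F_0'$ of $F$ with respect to $\Psi' \cup \hat\Psi'$ will in general differ from $F_0$. My strategy is to show that the pointwise restriction of $\tau_{F_0}$ over each $m \in M$ depends only on $F|_{E_m \times_M \hat{E}_m}$, which is manifestly intrinsic. The map $\tau_{F_0}$ is $C^\infty(M)$-linear between locally free modules of finite rank, so fiberwise invertibility is equivalent to invertibility at each $m$. At any such $m$, every positive basic-degree term vanishes upon restriction to the fiber, since it contains a pullback of a positive-degree form from $M$. Consequently $F_0$ and $F_0'$ restrict to the same form on $E_m \times_M \hat{E}_m$, and the induced pointwise maps $\tau_{F_0}|_m$ and $\tau_{F_0'}|_m$ are given by the same kernel.

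To close the argument I would identify the source and target of these pointwise maps. Expanding a generator of $\Psi'$ in $\wedge \IP{\Psi} \otimes \Omega^\bullet(M)$ and restricting to a fiber kills all positive basic-degree contributions, so $\iota^*\Psi'$ is obtained from $\iota^*\Psi$ by a degree-preserving linear transformation with coefficients in $C^\infty(M)$. Since both $\iota^*\Psi$ and $\iota^*\Psi'$ must freely generate $H^\bullet(F)$, this transformation is fiberwise invertible, and hence $\wedge \IP{\iota^*\Psi} = \wedge \IP{\iota^*\Psi'}$ as subspaces of $\Omega(F)$. The same holds on the hatted side. Thus $\tau_{F_0}|_m$ and $\tau_{F_0'}|_m$ coincide as maps on the same source and target, so one is an isomorphism precisely when the other is, and the same $F$ serves as a T-duality kernel for the primed generating sets.

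The main obstacle is conceptual rather than computational: one must carefully separate what depends on the choice of generators (the basic-degree filtration and the component $F_0$) from what is intrinsic (the form $F$ and its restriction to the fibers of $\pi$). Once this separation is in hand, the argument reduces to the elementary observation that positive-degree forms on $M$ pull back to zero on a single fiber.
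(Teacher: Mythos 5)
Your proof is correct, but it takes a genuinely different route from the paper's. The paper argues purely algebraically: it records how a change of generating set acts on $\wedge\IP{\Psi}$ (an invertible degree-preserving part $A_{2i-1}$ plus corrections decomposable into lower-degree generators), deduces that the volume element transforms by the nonzero scalar $\prod_i\det(A_{2i-1})$ and that the top-degree projection $\Pi_{\mathrm{top}}$ is basis-independent, and then factors $\tau_{F_0}$ as multiplication by $e^{F_0}$ followed by that projection. You instead restrict everything to a single fiber $E_m\times_M\hat E_m$, where every positive basic-degree term dies because positive-degree forms on $M$ pull back to zero over a point; this makes both the kernel (as $F_0|_{\mathrm{fib}}=F|_{\mathrm{fib}}=F_0'|_{\mathrm{fib}}$) and the source and target subalgebras manifestly intrinsic. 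Your route has a real advantage: it deals head-on with the fact that the basic-degree filtration, and hence $F_0$ itself, changes with the generating set --- a point the paper's factorization treats only implicitly --- and it reduces the whole lemma to the observation that fiber integration commutes with restriction to fibers. The paper's route, in exchange, yields explicit transformation data (the determinant factor, the invariance of $\A^{<N}$) that is reusable elsewhere. One small imprecision to fix: $\iota^*\Psi'$ is not obtained from $\iota^*\Psi$ by a degree-preserving \emph{linear} transformation alone; as in the paper's expansion $\tilde\psi^j_{2i-1}=\sum_k A^j_{2i-1,k}\psi^k_{2i-1}+p^j_{2i-1}$, there are additional terms decomposable into lower-degree generators. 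This does not harm your conclusion --- each primed generator still lies in the subalgebra generated by the unprimed ones and vice versa, so $\wedge\IP{\iota^*\Psi}=\wedge\IP{\iota^*\Psi'}$ as subalgebras of $\Omega(F)$ --- but the statement should be phrased in terms of generated subalgebras rather than a linear change of the spanning set.
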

\begin{proof}
To be able to refer to the spaces in question in a base-free way, we let  $\A = \wedge^\bullet\IP{\Psi}$ and $\hat\A = \wedge^\bullet\IP{\hat\Psi}$.

First  let us go through some algebraic preliminaries. For a choice of generating set
\[\Psi = \{ \psi_{2i-1}^j:j=1,\dots, n_i, \deg(\psi_{2i-1}^j) = 2i-1 \}\]
let $\sigma_\Psi = \wedge_{i}\wedge_{j=1}^{n_i}\psi_{2i-1}^j$ be the volume element of that generating set. Notice that for a different choice of generating set, say $\tilde\Psi$, we have, for each $i$ and $j$ that 
\[\tilde\psi_{2i-1}^j = \sum_{k}A^j_{2i-1,k}\psi_{2i-1}^k + p^j_{2i-1},\]
where $p^j_{2i-1}$ is an element in the exterior algebra generated by the generators of degree less than $2i-1$. In particular the matrix $A_{2i-1} = (A^j_{2i-1,k})$ is invertible and we have that
\begin{equation}
\sigma_{\tilde{\Psi}} = \Pi_i \det(A_{2i-1})\sigma_\Psi.
\end{equation}
Also, for the same reason, writing the wedge product of a partial collection of the elements $\tilde\psi$ in the basis $\Psi$ does not contain the element $\sigma_{\Psi}$ as a component. 

Rephrasing this in a more elegant way, if $|\Psi| = N$ (that is $\A$ is a free exterior algebra on $N$ generators), and  $\A^{<N} $ are the elements of $\A$ obtained as linear combinations of the wedge product of at most $N-1$ generators, then $\mc{A}^{<N}$ is a codimension-one subspace,
\[0 \to \A^{<N} {\longrightarrow}\A \stackrel{\Pi_{top}}{\longrightarrow} \A/ \A^{<N} \to 0\]
is a short exact sequence and $\Pi_{top}(\sigma_\Psi) \neq 0$.

Now to the proof. Firstly, it is clear that changing the transgressive generating set $\hat\Psi$ changes the map $\tau_{F_0}$ by an overall change of basis for the vector space $\hat\A$ which does not affect the isomorphism condition, so we only need to study how changing the generating set $\Psi$ affects $\tau_{F_0}$.

Secondly, we can describe the map $\tau_{F_0}$ in more general terms as the composition of a product  (with the form $e^{F_0}$) and a projection onto $\hat\A \tensor \sigma_\Psi$. The product in $\A$ is a well defined operation and independent of basis and the projection is (equivalent to) $\Id_{\hat{\A}}\tensor \Pi_N$, which is independent of basis. Hence being an isomorphism is independent of  the generating set.
\end{proof}

\begin{lemma}\label{lem:basic-degree zero part}
For a T-dual pair, the basic-degree zero component of the T-duality kernel, $F$, satisfies
\[F_0 = \pi^*F_E + \hat\pi^*F_{\hat{E}} + \sum_{I,J\neq \emptyset}F_{IJ}\Psi_I\hat{\Psi}_J\]
with    $F_{IJ}$ constant and $F_E$ and $F_{\hat{E}}$ basic-degree zero forms in the transgressive complexes of $E$ and $\hat{E}$. In particular if $M$ is connected, the nondegeneracy condition only needs to be checked at a single fiber and is equivalent to the map
\[\hat{p}_*e^{\sum_{I,J\neq \emptyset}F_{IJ}\Psi_I\hat{\Psi}_J}p^*\colon \wedge \IP{\Psi} \to \wedge \IP{\hat\Psi}\]
being an isomorphism.
\end{lemma}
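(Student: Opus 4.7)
The lemma has two parts: the structural form of $F_0$, and a reduction of the nondegeneracy check at a single fibre to the map built from only the ``mixed'' kernel. My plan is to extract the basic-degree one part of the identity $dF=p^*H-\hat p^*\hat H$ for the structural claim, and then combine $C^\infty(M)$-linearity of $\tau_{F_0}$ with a manipulation of $e^{F_0}$ to identify $\tau_\gamma$.

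Write $F_0=\sum_{I,J}f_{IJ}(x)\Psi_I\hat\Psi_J$ with $f_{IJ}\in C^\infty(M)$. The basic-degree one part of $dF$ comes entirely from $F_0$: under $d$, a coefficient on $M$ raises basic-degree by one, while a generator $\psi_{2i-1}^j$ raises it by $\deg c_{2i}^j=2i\ge 2$, so components of $F$ of basic-degree different from zero contribute only to basic-degrees $\ge 2$ of $dF$. Hence $(dF)_1=\sum_{I,J}df_{IJ}\,\Psi_I\hat\Psi_J$. On the right-hand side, $(p^*H)_1$ contains only $\Psi$-monomials (since $H\in\Omega_\Psi$) and $(\hat p^*\hat H)_1$ only $\hat\Psi$-monomials, so the coefficient of $\Psi_I\hat\Psi_J$ in $(p^*H-\hat p^*\hat H)_1$ vanishes whenever both $I$ and $J$ are non-empty. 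Matching coefficients forces $df_{IJ}=0$ for such indices, so they are locally constant. Collecting the purely-$\Psi$ terms into $F_E\in\Omega_\Psi(E)$ and the purely-$\hat\Psi$ terms into $F_{\hat E}\in\Omega_{\hat\Psi}(\hat E)$ (both of basic-degree zero) yields the stated decomposition, where $\pi^*F_E$ and $\hat\pi^*F_{\hat E}$ are to be read as the pull-backs to $E\times_M\hat E$ through $p$ and $\hat p$.

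For the second assertion, I first verify $C^\infty(M)$-linearity of $\tau_{F_0}$: for $f\in C^\infty(M)$ the pull-back $p^*f=\hat p^*(\hat\pi^*f)$ passes through $\hat p_*$ by the projection formula. Thus $\tau_{F_0}$ is a morphism of free $C^\infty(M)$-modules and, on connected $M$, is an isomorphism iff it is so at any single fibre. Restricted to a fibre, multiplication by $e^{p^*F_E}$ is an invertible endomorphism of $\wedge\IP{\Psi}$ (its constant term is $1$ and the rest is nilpotent) and can be absorbed into $p^*$, while multiplication by $e^{\hat p^*F_{\hat E}}$ is an invertible endomorphism of $\wedge\IP{\hat\Psi}$ and passes out of $\hat p_*$; this presents $\tau_{F_0}$ as the composition of $\tau_\gamma$ with these invertible factors, so the two nondegeneracy conditions coincide. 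The main obstacle will be this reordering step: since the generators are odd, $e^{F_0}$ does not factor naively as $e^{p^*F_E}\cdot e^{\hat p^*F_{\hat E}}\cdot e^\gamma$, and the rearrangement needed to display $\tau_{F_0}$ as a composition around $\tau_\gamma$ must be executed with careful attention to the graded signs and the interaction between wedge product, pull-back and fibre integration.
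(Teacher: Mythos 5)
Your proof follows essentially the same route as the paper's: extract the mixed part of the basic-degree-one component of $dF = p^*H - \hat{p}^*\hat{H}$ to force $dF_{IJ}=0$, and then observe that the pure-$\Psi$ and pure-$\hat\Psi$ parts of $F_0$ contribute only pre- and post-composition by automorphisms of $\wedge\IP{\Psi}$ and $\wedge\IP{\hat\Psi}$. The ``obstacle'' you flag at the end is not one: $F_E$, $F_{\hat E}$ and the mixed term are all even elements of the exterior algebra on $\Psi\cup\hat\Psi$, hence central, so $e^{F_0}$ does factor as the product of the three exponentials without any sign bookkeeping.
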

\begin{proof}
The mixed term of the basic-degree one component of $dF$ is  $\sum_{I,J\neq \emptyset} dF_{IJ}\Psi_I\hat{\Psi}_J $, while $p^*H - \hat{p}^*\hat{H}$ has no mixed terms, hence for all pairs of nonempty multi-indices $I$ and $J$,   $dF_{IJ}=0$ and the functions $F_{IJ}$ are constant on connected components of $M$.

Finally notice that the terms $\pi^*F_E$ and $\hat\pi^*F_{\hat{E}}$  do not affect the nondegeneracy condition since they correspond to pre and post-composing by automorphisms of $\Omega_\Psi$ and $\Omega_{\hat{\Psi}}$, so nondegeneracy is equivalent to the condition stated in the lemma.
\end{proof}

\begin{corollary}
In the notation of the previous lemma, if $\Psi = \{\psi_i\colon i=1,\dots n\}$, $\hat{\Psi} = \{\hat\psi_i\colon i=1,\dots n\}$ and the mixed component of $F_0$ is quadratic and nondegenerate, that is
\[F_0 = \pi^*F_E + \hat\pi^*F_{\hat{E}} +  \sum_{i,j=1}^n F_{ij} \psi_i \hat\psi_j\]
and the matrix $(F_{ij})$ is nondegenerate, then $\tau_{F_0}$  is an isomorphism.
\end{corollary}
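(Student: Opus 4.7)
By Lemma \ref{lem:basic-degree zero part}, the summands $\pi^*F_E$ and $\hat\pi^*F_{\hat E}$ do not affect the nondegeneracy condition, and it is enough to test the map at a single fiber. The corollary therefore reduces to the purely algebraic claim that
\[\tau_G := \hat p_*\, e^G\, p^* \colon \wedge\IP{\Psi}\to\wedge\IP{\hat\Psi}\]
is an isomorphism, where $G = \sum_{i,j=1}^n F_{ij}\psi_i\hat\psi_j$ and $(F_{ij})$ is invertible.

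First I would analyse how $\tau_G$ interacts with the natural \Z-gradings of $\wedge\IP{\Psi}$ and $\wedge\IP{\hat\Psi}$ (by number of generators). Since each $\psi_i\hat\psi_j$ is even, $G^r/r!$ is a well-defined even form expanding as a sum of monomials carrying exactly $r$ (distinct, after antisymmetrisation) $\psi$-factors and $r$ $\hat\psi$-factors. Because $\hat p_*$ extracts the coefficient of $\sigma_\Psi = \psi_1\wedge\cdots\wedge\psi_n$, applying $\tau_G$ to a monomial $\psi_I$ with $|I|=k$ only receives a contribution from the $G^{n-k}/(n-k)!$ piece of $e^G$, and the output lies in $\wedge^{n-k}\IP{\hat\Psi}$. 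Thus $\tau_G$ decomposes as a direct sum of block maps $\tau_G^{(k)}\colon\wedge^k\IP{\Psi}\to\wedge^{n-k}\IP{\hat\Psi}$, and it suffices to show that each block is invertible.

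Second, expanding $\psi_I\cdot G^{n-k}/(n-k)!$ and using antisymmetry of the wedge product, a direct combinatorial calculation yields
\[\tau_G^{(k)}(\psi_I) = \eps_k \sum_{|J|=n-k}\det(F_{I^c,J})\,\hat\psi_J,\]
where $F_{I^c,J}$ is the $(n-k)\times(n-k)$ minor of $F$ on rows $I^c$ and columns $J$, and $\eps_k\in\{\pm 1\}$ is a global sign independent of $I$ and $J$. Consequently, up to the sign $\eps_k$ and the invertible relabelling $I\mapsto I^c$, the matrix of $\tau_G^{(k)}$ in the monomial bases is the $(n-k)$-th compound matrix $C_{n-k}(F)$. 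The Sylvester--Franke identity $\det C_{n-k}(F) = \det(F)^{\binom{n-1}{n-k-1}}$ then forces each $\tau_G^{(k)}$ to be invertible, and summing gives that $\tau_G$ is an isomorphism.

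The main obstacle is the sign bookkeeping in the second step: all generators $\psi_i$ and $\hat\psi_j$ are odd, of varying odd degrees, so rearranging the factors in $G^{n-k}$ introduces many signs that must be tallied before the compound-matrix description becomes visible. A cleaner alternative that I would likely adopt is to recognise $\tau_G$ as a Grassmann--Berezin Fourier transform $\alpha(\psi)\mapsto \int e^{\psi^{T}F\hat\psi}\,\alpha(\psi)\,D\psi$; when $F$ is the identity this is the standard Berezin Fourier transform, manifestly an isomorphism, and a general invertible $F$ is absorbed by the linear change of generators $\psi\mapsto F\psi$, which is itself an isomorphism precisely because $\det(F)\neq 0$.
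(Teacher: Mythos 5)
The paper states this corollary without proof, treating it as immediate from Lemma \ref{lem:basic-degree zero part}; your write-up supplies the missing argument, and it is correct. The reduction to the single-fibre algebraic map $\hat p_*\,e^G\,p^*$ with $G=\sum_{i,j}F_{ij}\psi_i\hat\psi_j$ is exactly the content of that lemma, and the block decomposition $\wedge^k\IP{\Psi}\to\wedge^{n-k}\IP{\hat\Psi}$ together with the identification of each block with a signed, relabelled copy of the $(n-k)$-th compound matrix of $(F_{ij})$ is the standard Grassmann--Gaussian computation; Sylvester--Franke (or simply functoriality of compounds, $C_r(F)C_r(F^{-1})=C_r(\mathrm{Id})=\mathrm{Id}$) then finishes it. One small inaccuracy: the sign in your displayed formula for $\tau_G^{(k)}(\psi_I)$ is not independent of $I$ --- reordering $\psi_I\psi_{I^c}$ into $\sigma_\Psi$ contributes the shuffle sign of the pair $(I,I^c)$, which does vary with $I$ (only the sign coming from rearranging $G^{n-k}$ itself depends solely on $n-k$). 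This is harmless, since it amounts to pre-composing the block with a diagonal $\pm1$ matrix and so does not affect invertibility, but the formula as written is not quite right. Your closing alternative --- absorbing $F$ via the substitution $\psi_j'=\sum_i F_{ij}\psi_i$ and reducing to the Berezin Fourier transform --- is the cleanest route and is presumably the argument the author regards as obvious; note only that this change of generators is not degree-homogeneous when the $\psi_i$ have distinct odd degrees, which is irrelevant here because the nondegeneracy condition only sees the $\Z/2\Z$-graded exterior algebra, under which the top class $\sigma_\Psi$ rescales by $\det(F)\neq 0$.
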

In the usual T-duality for principal torus bundles  with background 3-form fluxes, due purely to degree consideratons, the situation decribed in the corollary above is the only case that can happen and hence the nondegeneracy condition is normally phrased by saying that $F$ puts the fibers of the two torus bundles in duality. Once we allow for higher degree twists, the T-duality kernel no longer needs to be only quadratic in the transgressive  generating set (in fact, being quadratic is not even independent of the choice of such set) and hence our definition of transgressive T-duality is in fact more general (see Example \ref{ex:identitiy?}).

\subsection{Isomorphism of twisted cohomology}

This definition of T-dual spaces yields the usual isomorphism of twisted cohomology between T-duals.

\begin{theorem}\label{theo:iso1}
Let $(E,H)\stackrel{\pi}{\longrightarrow} M$ and $(\hat{E},\hat{H})\stackrel{\hat\pi}{\longrightarrow} M$ be a T-dual pair with $\Psi$ and $\hat{\Psi}$ transgressive generating sets and $dF =p^*H - \hat{p}^*\hat{H}$.  Then the map
\[\tau_F\colon (\Omega_{\Psi},d^H) \to (\Omega_{\hat{\Psi}},d^{\hat{H}}),\quad  \tau_F = \hat{p}_*\circ e^F\circ p^* \]
is an isomorphism. In particular T-dual spaces have isomorphic twisted cohomology.
\end{theorem}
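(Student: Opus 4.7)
\noindent
The plan is to prove the statement in two steps: first, that $\tau_F$ intertwines the twisted differentials $d^H$ and $d^{\hat H}$; second, that it is a linear isomorphism of the underlying graded vector spaces. Together these imply that $\tau_F$ is an isomorphism of chain complexes, and in particular induces an isomorphism on twisted cohomology.

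For the chain-map step I would use two standard ingredients. The map $\hat p_*$ restricted to the transgressive subcomplex $\Omega_{\Psi\cup\hat\Psi}$ amounts algebraically to extracting the coefficient of $\sigma_\Psi$, so it commutes with the exterior derivative and satisfies the projection formula; the same is true for $p^*$. Hence
\[
p^*\circ d^H = d^{p^*H}\circ p^*,\qquad \hat p_*\circ d^{\hat p^*\hat H}=d^{\hat H}\circ\hat p_*,
\]
and the chain-map identity reduces to the conjugation statement
\[
(d+\hat p^*\hat H)\circ e^F = e^F\circ(d+p^*H),
\]
which follows from the Leibniz-type identity $d(e^F\wedge\omega)=dF\wedge e^F\wedge\omega + e^F\wedge d\omega$ together with the gerbe-trivialisation hypothesis $dF=p^*H-\hat p^*\hat H$.

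For the isomorphism step I would filter both complexes by basic-degree,
\[
F^k\Omega_\Psi = \bigoplus_{j\geq k}\wedge^\bullet\IP{\Psi}\otimes\Omega^j(M),
\]
and similarly for $\hat\Psi$. The operator $\tau_F$ preserves this filtration because $p^*$ and $\hat p_*$ preserve basic-degree while every component of $F$ has non-negative basic-degree. On the associated graded only the basic-degree-zero piece $e^{F_0}$ of $e^F$ contributes, giving
\[
\mathrm{gr}^k(\tau_F) = \tau_{F_0}\otimes\Id_{\Omega^k(M)}\colon \wedge^\bullet\IP{\Psi}\otimes\Omega^k(M)\to \wedge^\bullet\IP{\hat\Psi}\otimes\Omega^k(M),
\]
which is an isomorphism by the nondegeneracy hypothesis, extended $\Omega^k(M)$-linearly from its fiberwise statement on basic-degree zero. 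The filtration is bounded by $\dim M$, so a short induction promotes the graded isomorphism to an isomorphism of $\tau_F$ itself.

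The main difficulty will be the careful sign-bookkeeping in the Leibniz rule for $d\circ e^F$, since $F$ has mixed total degree in general. This is routinely handled by treating forms as polyform spinors and $e^F$ as a Clifford-type exponential, in which framework the identity is parity-insensitive; this is the standard setting in which T-duality computations of this shape are performed.
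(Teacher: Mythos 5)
Your proposal is correct and follows essentially the same route as the paper: the chain-map property comes from the gerbe-trivialisation identity $dF=p^*H-\hat p^*\hat H$ together with the standard compatibilities of $p^*$, $\hat p_*$ and $d$, and invertibility is reduced to the basic-degree-zero nondegeneracy condition. Your filtration-by-basic-degree argument is just a more explicit rendering of the paper's observation that $\tau_F$ is an $\Omega^\bullet(M)$-module map whose basic-degree-zero part is $\tau_{F_0}$ (and your sign worry is moot, since $F$ is even because $H$ and $\hat H$ are odd).
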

\begin{proof}
The gerbe trivialization condition, $dF = p^*H - \hat{p}^*\hat{H}$, yields that $\tau_F$ is a map of complexes:
\begin{align*}
\tau_{F}(d^H\gf) &= \hat{p}_*e^F p^* d^{H}\gf = \hat{p}_*d^{p^*H- dF}(e^F p^*\gf)\\
& =   \hat{p}_*d^{\hat{p^*}\hat{H}}(e^F p^*\gf) = d^{\hat{H}}\hat{p}_*e^F p^*\gf\\
&= d^{\hat{H}}\tau_F(\gf).
\end{align*}

Finally, since $\tau_F$ is a map of $\Omega^\bullet(M)$-modules, to check it is an isomorphism it is enough to check that the image of the generating set $\Psi$ is a generating set of $\wedge^\bullet \hat{\Psi}$, which in turn is the case if and only if the projection to the basic-degree zero part is a generating set. That is $\tau_F$ is an isomorphism if and only if
\[\Pi_0 \circ \tau_F|_{\Omega^0(M)\tensor \wedge\IP{\Psi}} \to \Omega^0(M)\tensor \wedge\IP{\hat\Psi}\]
is an isomorphism. But this is precisely the map that appears in the nondegeneracy condition for T-duality. 
\end{proof}

\begin{definition}
We refer to the map $\tau_F$ as the \emph{T-duality map on forms} or the \emph{T-duality transform}. 
\end{definition}
 
\subsection{Extensions of Courant algebroids}

Extended Courant algebroids, introduced in \cite{LUPERCIO201482}, proved to be relevant geometric objects for spherical T-duality \cite{CHU24}. These are obtained from higher Courant algebroids, that is, $TM \oplus \wedge^{2(n+k)-3}T^*M$, by adding a formal odd element, $\psi$ of degree $2n-1$ similar to a Hirsch extension of a differential graded algebra and its dual element $\del_\psi$ of degree $-2n+1$:
\begin{equation}\label{eq:extended higher}
c_\psi = TM \oplus (\wedge^{2n-2}T^*M\tensor\IP{\del_\psi}) \oplus (\IP{\psi}\tensor\wedge^{2k-2}T^*M)\oplus \wedge^{2(n+k)-3}T^*M 
\end{equation}
The closed forms allowed to twist the bracket are elements of $\Omega_\psi^{2(n+k)-1}(M)$.

In the present setting we will use the same general idea, but need to extend the higher Courant algebroid by a bigger vector space to capture enough features of odd transgressive fibrations. 

\subsubsection{Clifford--Courant algebroids}

The underlying algebraic object we will use is a Clifford algebra. Recall that given an odd-graded finite dimensional vector space, $V$, we can endow $V \oplus V^*$ with the symmetric natural pairing. The action of $V\oplus V^*$ on $\wedge^\bullet V^*$ by interior and exterior product extends to an action of the Clifford algebra $\Clif(V\oplus V^*)$ on $\wedge^\bullet V^*$ and gives rise to an algebra isomorphism $\Clif(V\oplus V^*) = \End(\wedge^\bullet V)$. In our case, we will take $V^*$ to be the space generated by a transgressive generating set: $V^* = \IP{\Psi}$.

Given a manifold $M$ and an odd-graded vector space, $V^*$, we can form the bundle
\[C_{V^*} = TM \oplus (\Clif(V\oplus V^*)\tensor \wedge^\bullet T^*M)^{od},\]
where ${od}$ indicates taking the odd degree component.

If we further have a degree-one linear map $d^V \colon V^* \to \Omega^\bullet(M)$ whose image lies in the space of closed forms, we can extend the exterior derivative from an operator on $\Omega^\bullet(M)$ to an operator on $\wedge^\bullet V^*\tensor \Omega^\bullet(M)$ by the Leibniz rule. The extended derivative on $\wedge^\bullet V^*\tensor \Omega^\bullet(M)$ is essentially $d^V + d$. We can describe $d^V$ explicitly: given a basis $\{\del_{\psi_1},\dots,\del_{\psi_N}\}$ for $V$ and dual basis $\{\psi_1,\cdots, \psi_N\}$ for $V^*$, we let $c_i = d^V \psi_i$ and then
\[d^V = \sum \del_{\psi_i} \tensor c_i  \in \Clif(V\oplus V^*)\tensor \Omega^\bullet(M).\]

Given a closed odd form $H \in \wedge^\bullet V^*\tensor \Omega^\bullet(M)$ we can form the twisted differential $d^H = d^V +d + H$.

\begin{lemma}
For $v,w\in \Gamma(TM \oplus (\Clif(V\oplus V^*)\tensor \wedge^\bullet T^*M)^{od})$ there is a unique element $\Cour{v,w}_H \in \Gamma(TM \oplus (\Clif(V\oplus V^*)\tensor \wedge^\bullet T^*M)^{od})$ for which the following holds:
\[\Cour{v,w}_H \cdot \gf = \{\{v,d^H\},w\} \gf,\]
where $\{\cdot,\cdot\}$ denotes the graded commutator of operators.
\end{lemma}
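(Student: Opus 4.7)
The plan is to adapt the classical derived-bracket construction of the Courant bracket, in the style of Kosmann--Schwarzbach, to the present Clifford-enriched setting. I will first settle uniqueness, then construct the element by a direct computation on homogeneous generators.

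For uniqueness, the key observation is that the action map $\Gamma(C_{V^*}) \to \End(\wedge^\bullet V^* \tensor \Omega^\bullet(M))$ is injective. The Clifford-algebra component is determined by its action on the faithful spinor module $\wedge^\bullet V^*$, via the isomorphism $\Clif(V \oplus V^*) \cong \End(\wedge^\bullet V^*)$ recalled just above the statement; the form-coefficient factor is then pinned down by $\Omega^\bullet(M)$-linearity, and the $\Gamma(TM)$ component is recovered as the principal symbol, for instance by inspecting how the operator acts on $\Omega^0(M) \subset \Omega^\bullet(M)$. Hence if an element $\Cour{v,w}_H$ satisfying the stated identity exists, it is unique.

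For existence I would split $v = X + \gamma$ and $w = Y + \delta$ with $X, Y \in \Gamma(TM)$ and $\gamma, \delta \in \Gamma((\Clif(V \oplus V^*) \tensor \wedge^\bullet T^*M)^{od})$, and decompose $d^H = d + d^V + H$. Each inner commutator is elementary: $\{X, d\} = \mc{L}_X$, $\{X, d^V\} = 0$ because $d^V$ carries no $M$-derivative, $\{X, H\}$ acts by Clifford/exterior multiplication by $\iota_X H$, $\{\gamma, d\}$ differentiates the form-coefficients of $\gamma$, and $\{\gamma, d^V\}, \{\gamma, H\}$ are plain products inside $\Clif(V \oplus V^*) \tensor \wedge^\bullet T^*M$. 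Taking the outer bracket with $w$ then produces the $\{\{X, d\}, Y\} = \iota_{[X,Y]}$ piece from the standard Courant computation, while the remaining terms are all zeroth-order in $M$-derivatives and act on forms by Lie derivatives of coefficients, contractions with vector fields, or Clifford/exterior multiplications. Summing these and invoking uniqueness yields the desired section $\Cour{v,w}_H$.

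The main technical obstacle is to verify that no second-order $M$-differential operators survive in the double bracket. This is the usual Courant-bracket cancellation and will follow from the graded Jacobi identity for $\{\cdot,\cdot\}$ together with $d^2 = 0$. The only new check, beyond that classical argument, is that the $d^V$ and $H$ contributions remain inside $\Clif(V \oplus V^*) \tensor \wedge^\bullet T^*M$; this is immediate because $d^V$ lies in $\Clif(V \oplus V^*) \tensor \Omega^\bullet(M)$ by construction and $H$ lies in $\wedge^\bullet V^* \tensor \Omega^\bullet(M) \subset \Clif(V \oplus V^*) \tensor \Omega^\bullet(M)$, so their commutators with $X$ and $\gamma$ never leave the target space. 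Parity is automatic since $v, w$ and $d^H$ are all odd, making $\{\{v, d^H\}, w\}$ odd as required by the $(\cdot)^{od}$ condition in the statement.
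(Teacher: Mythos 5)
Your proposal is correct and follows essentially the same route as the paper's proof: the whole point is that $d^V$ and $H$ act fiberwise by Clifford and exterior multiplication, so graded commutators with them never leave $\Clif(V\oplus V^*)\tensor\wedge^\bullet T^*M$, and the only remaining content is the classical derived-bracket computation identifying $\{\{v,d\},w\}$ with the (higher) Courant bracket on $TM\oplus\wedge^\bullet T^*M$. Your explicit uniqueness argument via faithfulness of the spinor representation is a useful supplement, since the paper asserts uniqueness without comment.

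Two local claims should be corrected, though neither affects the validity of the argument. First, $\{X,d^V\}$ does not vanish in general: writing $d^V=\sum_i\del_{\psi_i}\tensor c_i$ and letting $X$ act by $\iota_X$, one has $\{\iota_X,c_i\wedge\}=(\iota_X c_i)\wedge$, hence $\{X,d^V\}=\pm\sum_i\del_{\psi_i}\tensor(\iota_X c_i)$. The correct consequence of your observation that $d^V$ carries no $M$-derivative is that this commutator is algebraic, i.e.\ given by the action of an element of $\Clif(V\oplus V^*)\tensor\wedge^\bullet T^*M$ --- which is all the lemma requires. Second, in the uniqueness step the restriction to $\Omega^0(M)$ annihilates $\iota_X$ rather than isolating it; the $TM$-component is instead recovered from the $T^*M$-degree-lowering part of the action on $\Omega^1(M)$, and since $\iota_X$ is $C^\infty(M)$-linear, ``principal symbol'' is not quite the right notion here. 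With these repairs the proof is complete and coincides in substance with the one in the paper.
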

\begin{proof}
Both $d^V$ and $H$ are  fiberwise elements of $\Clif(V\oplus V^*)\tensor \wedge^\bullet T^*M$ hence taking graded commutators with them amounts to the commutator operation in the Clifford algebra. Therefore, to prove the result it is enough to check that  $\{\{v,d\},w\}$ corresponds to the action of an element in  $\Gamma(TM \oplus (\Clif(V\oplus V^*)\tensor \wedge^\bullet T^*M)^{od})$, where $d$ is the exterior derivative in $\Omega^\bullet(M)$ which graded commutes with elements in $\wedge^\bullet V$ and with the action of  $\Clif(V\oplus V^*)$. Hence the proof boils down to the computation that the untwisted Courant bracket on $TM \oplus \wedge^\bullet T^*M$ is the derived bracket associated to $d$.
\end{proof}

\begin{definition}
The bundle $(C_{V^*}, \Cour{\cdot,\cdot}_H)$ is the \emph{Clifford--Courant algebroid} associated to the extension $(V^*,d^V)$ and the closed odd-form $H \in \wedge^\bullet V^*\tensor \Omega^\bullet(M)$.
\end{definition}

\begin{example}[The Clifford--Courant algebroid of an oriented sphere bundle]

Let
\[S^{2n-1}\cdots (E,H) \to M\]
 be an oriented sphere bundle endowed with a closed $2(n+k)-1$-form. Let $\psi \in \Omega^{2n-1}(E)$ be a global angular form
 . Then the transgressive complex of $E$ is given by
\[\Omega_\psi(M) = \wedge^\bullet\IP{\psi}\tensor \Omega^\bullet(M).\]
In this case the graded vector space used to form the extension is $V^* = \IP{\psi}$, which has a single generator in degree $2n-1$ and the corresponding Clifford--Courant algebroid is
\begin{align*}C_\psi & = TM \oplus (\Clif(\IP{\del_\psi,\psi})\tensor \wedge^\bullet T^*M)^{od}\\
&= TM \oplus  (\IP{\del_\psi}\tensor\wedge^{ev}T^*M)\oplus ( \IP{\psi}\tensor \wedge^{ev} T^*M) \oplus (\IP{\del_\psi \psi} \tensor \wedge^{od}T^*M)
\end{align*}
Already at this level, we see that this vector bundle has higher rank than the corresponding extended Courant algebroid \eqref{eq:extended higher}. Indeed the differences are
\begin{itemize}
\item because in the latter we are dealing with a twist of fixed degree, we can also fix the degrees of the summands instead of just splitting them into even and odd,
\item the last summand in the Clifford--Courant algebroid (corresponding to $\del_\psi \psi$) is missing in the extended Courant algebroid.
\end{itemize}
\end{example}

\subsection{Isomorphism of Clifford--Courant algebroids}

With the candidate Courant algebroids at hand we are ready to state and prove the second isomorphism theorem associated to T-duality.

\begin{theorem} \label{theo:iso2}Let $(E,H,\Psi)\to M$ and $(\hat{E},\hat{H},\hat{\Psi})\to M$ be T-dual spaces with $dF = p^*H - \hat{p}^*\hat{H}$. Then there is an isomorphism of Clifford--Courant algebroids
\[\mc{T}_F \colon  C_{\IP{\Psi}}\to C_{\IP{\hat{\Psi}}}\]
defined by the property
\begin{equation}\label{eq:T-duality compatible}
\tau_F(v \cdot \gf) = \mc{T}_F(v)\cdot \tau_F(\gf).
\end{equation}
\end{theorem}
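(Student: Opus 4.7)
The strategy is to define $\mc{T}_F$ as conjugation by $\tau_F$ and then verify compatibility with the algebraic structure. Since $\tau_F$ is an invertible $\Omega^\bullet(M)$-linear map (Theorem \ref{theo:iso1}), for any section $v$ of $C_{\IP{\Psi}}$ viewed as an operator on $\Omega_\Psi$, the composition
\[\mc{T}_F(v) := \tau_F \circ v \circ \tau_F^{-1}\]
defines an operator on $\Omega_{\hat{\Psi}}$, and by construction this is the unique operator satisfying \eqref{eq:T-duality compatible}. The work then splits into three tasks: (a) verify that $\mc{T}_F(v)$ is in fact a section of $C_{\IP{\hat{\Psi}}}$ rather than merely an abstract operator, (b) check that $v\mapsto\mc{T}_F(v)$ is a bundle isomorphism, and (c) show that $\mc{T}_F$ intertwines the Courant brackets.

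Bracket compatibility (c) follows formally from the derived-bracket description of $\Cour{\cdot,\cdot}_H$ together with the fact that $\tau_F$ intertwines the twisted differentials: the calculation in the proof of Theorem \ref{theo:iso1} gives $\tau_F d^H \tau_F^{-1} = d^{\hat{H}}$, and conjugation by an invertible operator preserves graded commutators, so
\[ \tau_F\{\{v,d^H\},w\}\tau_F^{-1} = \{\{\mc{T}_F(v),d^{\hat{H}}\},\mc{T}_F(w)\}.\]
Once (a) is in hand, this yields $\mc{T}_F(\Cour{v,w}_H) = \Cour{\mc{T}_F(v),\mc{T}_F(w)}_{\hat{H}}$ by the uniqueness part of the lemma characterising the Clifford--Courant bracket.

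For (a) and (b) the approach is to analyse $\mc{T}_F$ on a generating set of $C_{\IP{\Psi}}$ by exploiting the decomposition $\tau_F = \hat{p}_*\circ e^F\circ p^*$. Each factor has a clean interpretation inside the ambient Clifford algebra $\Clif((V\oplus\hat V)\oplus(V\oplus\hat V)^*)$ acting on $\Omega_{\Psi\cup\hat{\Psi}}$: $p^*$ is inclusion, $\hat{p}_*$ is contraction with the top element $\sigma_\Psi\in\wedge^\bullet V^*$, and $e^F$ is exterior multiplication by a fiberwise Clifford element. Conjugating the basic generators of $C_{\IP{\Psi}}$, namely $X\in TM$ acting by $\iota_X$, a base form $\alpha\in\wedge^\bullet T^*M$ acting by wedge, and the Clifford generators $\psi_i$ and $\del_{\psi_i}$, then reduces to a chain of Clifford identities together with the odd-derivation property of $\iota_X$. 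In each case the resulting operator lies in $TM\oplus(\Clif(\hat V\oplus\hat V^*)\tensor\wedge^\bullet T^*M)^{od}$, i.e.\ gives a section of $C_{\IP{\hat{\Psi}}}$.

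The main obstacle is precisely this last verification: showing that after contraction with $\sigma_\Psi$ all residual $V$-indices are saturated by contributions from $e^F$, leaving a pure operator on $\Omega_{\hat{\Psi}}$ of the claimed form. This is where the nondegeneracy condition of Definition \ref{def:T-duality} enters essentially, since it is exactly what guarantees that the top component of $e^{F_0}$ pairs nontrivially with $\sigma_\Psi$ on each fiber. Once (a) is settled, (b) follows by applying the same construction to the inverse of $\tau_F$, producing an inverse for $\mc{T}_F$ and hence a bundle isomorphism.
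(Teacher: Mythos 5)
Your overall architecture matches the paper's: define $\mc{T}_F$ by conjugation, $\mc{T}_F(v)=\tau_F\circ (v\cdot)\circ\tau_F^{-1}$, and obtain bracket compatibility from the derived-bracket description of $\Cour{\cdot,\cdot}_H$ together with the identity $\tau_F\circ d^H\circ \tau_F^{-1}=d^{\hat H}$ and the uniqueness clause of the lemma defining the bracket. Parts (b) and (c) of your outline are correct and essentially identical to the paper. The problem is part (a), which you yourself flag as ``the main obstacle'' and do not actually carry out: you propose to conjugate each generator of $C_{\IP{\Psi}}$ through the factorization $\hat{p}_*\circ e^F\circ p^*$ inside the ambient Clifford algebra of the correspondence space and to check by hand that all residual $V$-indices are saturated. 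As written this is a genuine gap, and the mechanism you invoke to close it is misplaced: nondegeneracy is not needed for any index-saturation argument --- it has already been spent in Theorem \ref{theo:iso1} to make $\tau_F$ invertible, after which $\tau_F\circ(v\cdot)\circ\tau_F^{-1}$ is automatically an operator on $\Omega_{\hat{\Psi}}$ with no $V$-indices left to saturate. What actually needs proof is that this operator has the restricted form $X+\hat\xi$ with $\hat\xi\in\Clif(\IP{\hat\Psi}^*\oplus\IP{\hat\Psi})\tensor\wedge^\bullet T^*M$, i.e.\ that it is not some more general endomorphism of $\Omega_{\hat\Psi}$.

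The paper closes this in one structural step that makes your computation unnecessary. Writing $v=X+\xi$, set $A(v)=\tau_F\circ(v\cdot)\circ\tau_F^{-1}-\iota_X$. Since $\tau_F$ is a map of $\Omega^\bullet(M)$-modules and $\iota_X$ is a derivation over its restriction to basic forms, $A(v)$ is a map of right $\wedge^\bullet T^*M$-modules. Because $\Clif(W\oplus W^*)=\End(\wedge^\bullet W^*)$ for $W^*=\IP{\hat\Psi}$, the bundle $\Clif(\IP{\hat\Psi}^*\oplus\IP{\hat\Psi})\tensor\wedge^\bullet T^*M$ is the \emph{entire} algebra of such module endomorphisms of $\Omega_{\hat\Psi}$, so $A(v)$ is automatically realized by a unique Clifford element; a parity count ($v\cdot$ shifts degree by $1$ mod $2$, while $\tau_F$ shifts or preserves it) places it in the odd summand, hence $\mc{T}_F(v)=X+A(v)\in C_{\IP{\hat\Psi}}$. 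If you insist on your route, you would have to exhibit the conjugates of $\iota_X$, $\alpha\wedge$, $\psi_i$ and $\del_{\psi_i}$ explicitly as sections of $C_{\IP{\hat\Psi}}$; for a general (non-quadratic, mixed-degree) kernel $F$ this is substantially more work than the ``chain of Clifford identities'' you anticipate, and nothing in your text supplies it.
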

\begin{proof}
We start by defining a linear map $A\colon C_{\IP{\Psi}} \to \End(\wedge_{\hat{\Psi}}^\bullet T^*M)$, where the codomain is the space of   endomorphisms as $\wedge^\bullet T^*M$-modules.

Let $v \in C_{\IP{\Psi}}$ be given by $X + \xi$, where $X \in TM$ and $\xi  \in \Clif(\IP{\Psi}^*\oplus \IP{\Psi})\tensor \wedge^\bullet T^*M$ and $\hat\gf \in C_{\IP{\hat{\Psi}}}$. We let 
\[ A(v)(\hat \gf) =  \tau_F(v\cdot \tau_F^{-1}(\hat \gf))-X.\]
A direct check using the expression for $\tau_F$ shows that $A(v)$ is a map of right $\wedge^\bullet T^*M$-modules and as such it corresponds to the action of an element in $ \Clif(\IP{\hat{\Psi}}^*\oplus \IP{\hat{\Psi}})\tensor \wedge^\bullet T^*M$ (in the same way $ \Clif(\IP{\hat{\Psi}}^*\oplus \IP{\hat{\Psi}}) = \End(\wedge^\bullet \IP{\hat{\Psi}})$,  $\Clif(\IP{\hat{\Psi}}^*\oplus \IP{\hat{\Psi}})\tensor \wedge^\bullet T^*M = \End(\wedge^\bullet \IP{\hat{\Psi}})\tensor  \wedge^\bullet T^*M$). Therefore there is a unique element $w \in \Clif(\IP{\hat\Psi}^*\oplus \IP{\hat\Psi})\tensor \wedge^\bullet T^*M$ such that $A(v) = w$. Hence we have
 \[\tau_F(v\cdot \tau_F^{-1}(\hat \gf)) = (X + A(v))\hat \gf \in TM \oplus \Clif(\IP{\Psi}^*\oplus \IP{\Psi})\tensor \wedge^\bullet T^*M.\] 
And we define
\[\mc{T}_F(v) =  X + A(v),\]
which by construction satisfies \eqref{eq:T-duality compatible}. Notice that  modulo 2, $v\cdot$ shifts the degree of forms by $1$ and $\tau_F$ either shifts or preserves the degree of forms, therefore  $\mc{T}_F(v)$ also shifts degree by 1 and as such lies in the odd component of the Clifford bundle, that is $\mc{T}_F(v) \in C_{\IP{\hat{\Psi}}}$.

Since the brackets on $ C_{\IP{\Psi}}$ and  $C_{\IP{\hat{\Psi}}}$ are defined solely in terms of the Clifford action and the twisted differential, we conclude that $\mc{T}_F$ is a map of Courant algebroids.
\end{proof}

\section{Examples}

Now we provide a series of examples illustrating different aspects that emerge from T-duality for transgressive bundles.

\subsection{Spherical T-duality}
\begin{example}[Spherical T-duality \cite{BEM15,BEM15b,CHU24,LSW16}] 
Recall that  two oriented sphere bundles  $S^{2n-1}\cdots (E,H)\stackrel{\pi}{\longrightarrow} M$ and $S^{2k-1}\cdots (\hat{E},\hat{H})\stackrel{\hat{\pi}}{\to} M$ over a common base $M$ endowed with closed forms of degree $2(n+k)-1$ are \emph{spherical T-duals} if there is a form $F \in \Omega^{2(n+k)-2}(E \times_M \hat{E})$ such that
\begin{equation}\label{eq:condition1}
dF = p^*H - \hat{p}^*\hat{H},
\end{equation}
\begin{equation}\label{eq:condition2}
\pi_* \circ p_* (F) \neq 0,
\end{equation}
After picking global angular forms $\psi$ and $\hat{\psi}$ of degree $2n-1$ and $2k-1$ respectively, one can pass to transgressive complexes and decompose $F$ as in Lemma \ref{lem:basic-degree zero part}:
\begin{equation}\label{eq:spherical F}
F  = p^*F_E + \lambda \psi\wedge \hat{\psi} + \hat{p}^*F_{\hat{E}},
\end{equation}
where $\lambda = \pi_*\hat{p}_*F \neq 0$. Since $F$ is quadratic on the transgressive generators we see that the nondegeneracy condition imposed for spherical T-duality implies that of transgressive T-duality. Conversely, because $F$ has degree $2(n+k-1)$ and the transgressive generators have degree $2n-1$ and $2k-1$ it also follows that transgressive T-duality implies the spherical T-duality condition. Therefore, for spherical T-duality with background form of degree $2(n+k)-1$ (including T-duality for principal circle bundles) the notion transgressive T-duality agrees with the previous notions of T-duality already present in the literature.

In \eqref{eq:spherical F} we used the decomposition of a generic T-duality kernel, $F$. Yet the terms $p^*F_E$ and $\hat{p}^*F_{\hat{E}}$ can be incorporated on the data on $E$ and $\hat{E}$ as overall symmetries, so the relevant part of the T-duality kernel is
\[F  =\lambda \psi\wedge \hat{\psi}.\]
For this kernel both T-duality maps acquire a very simple form, namely
\begin{align*}
\tau_F (\gf_0 + \psi\gf_1) &= \gf_1 + \lambda \hat \psi \gf_0,\\
 \mc{T}_F(X + \xi_0 + \del_\psi \xi_1+ \psi\xi_2 + \del_{\psi} \psi \xi_3)& = X + \xi_0 + \frac{1}{\lambda}\del_{\hat{\psi}} \xi_2+ \lambda\hat{\psi}\xi_1 +  \hat{\psi}\del_{\hat\psi} \xi_3
\end{align*}
\end{example}

\subsection{Spherical T-duality with multidegree background form}

Having forms of mixed degree has been part of the theory so far, since for $H$ of degree $2n+1$, the $d^H$-cohomology is $\Z/2n\Z$-graded. But the way that the form $H$ appears, as the curvature of the higher Courant algebroid (or of a higher gerbe) seems to hint that it should have a specific degree. Yet closed odd-forms are also curvature forms of the higher Courant algebroid $T \oplus \wedge^{od}T^*$ so one could expect a generalisation in this direction and in fact the only requirement in the definition of T-duality is that $H$ should be an odd form. Next we revisit spherical T-duality but allow for a multidegree twist. 

We consider an oriented sphere bundle $S^{2n-1}\cdots E \stackrel{\pi}{\longrightarrow} M$ endowed with a multidegree form $H$. In this situation, $[\pi_*H]$ is a collection of (possibly unrelated) even cohomology classes on $M$ and one might be tempted to use them all separately to produce a T-dual space, e.g., by using them as a collection of Chern classes of a principal $U(n)$-bundle. The problem with this idea is that the nondegeneracy condition on the T-duality kernel implies that the order of transgressive generating sets must be the same for T-dual spaces. Since $E\to M$ has a transgressive set with only one generator, the same must hold for $\hat{E}\to M$, so $\hat{E}$ must also be an oriented sphere bundle over $M$. 

The case of sphere bundles is restrictive enough to allow us to prove sharper results on existence of T-duals.

\begin{theorem}
A T-dual of $S^{2n-1}{\cdots} (E,H) \stackrel{\pi}{\longrightarrow} M$ exists if and only if $\pi_*H$ is the product of an integral class of fixed degree by an invertible element in $H^\bullet(M)$, that is,
\begin{equation}\label{eq:multidegree H condition}
[\pi_*H] = \hat{\mc{E}} \mc{B},
\end{equation}
where $\hat{\mc{E}}$ is an integral class and $\mc{B}$ is a mutidegree even cohomology class with nonvanishing zero degree term, $\mc{B}_0 \neq 0$.
\end{theorem}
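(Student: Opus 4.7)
The strategy is to reduce the T-duality conditions to a cohomological identity on $M$ by passing to the transgressive complex.

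Pick a global angular form $\psi \in \Omega^{2n-1}(E)$ with $d\psi = \pi^* e$ and write every closed element of the transgressive complex of $E$ as $H = \pi^* h_0 + \psi\, \pi^* h_1$. Then $dH = 0$ forces $dh_1 = 0$, $dh_0 = -e\, h_1$, and in particular $\pi_* H = h_1$. Assume $(\hat{E}, \hat{H})$ is a T-dual. By the rank discussion preceding the theorem, $\hat{E} \to M$ is an $S^{2k-1}$-bundle for some $k$; pick a global angular form $\hat\psi$, let $\hat e$ be its Euler class, and write $\hat H = \hat\pi^*\hat h_0 + \hat\psi\, \hat\pi^*\hat h_1$. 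A T-duality kernel on $E \times_M \hat{E}$ necessarily has the form
\[ F = f_0 + \psi f_1 + \hat\psi f_2 + \psi\hat\psi f_3, \qquad f_i \in \Omega^\bullet(M). \]
Expanding $dF = p^* H - \hat{p}^* \hat{H}$ by the four monomials $1, \psi, \hat\psi, \psi\hat\psi$ produces four equations on $M$; the $\psi\hat\psi$-component gives $df_3 = 0$, and the $\psi$-component gives $h_1 = -df_1 - \hat e\, f_3$, so at cohomology level $[h_1] = -[\hat e]\cdot[f_3]$. By Lemma \ref{lem:basic-degree zero part}, nondegeneracy of the kernel translates to $(f_3)_0 \neq 0$. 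Setting $\hat{\mc E} := [\hat e]$, which is an integral class of pure degree $2k$, and $\mc B := -[f_3]$, which is a multidegree even class with non-vanishing zero-degree component, yields the asserted factorisation.

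For the converse, given $[\pi_* H] = \hat{\mc E}\, \mc B$ with $\mc B_0 \neq 0$, realise $\hat{\mc E}$ as the Euler class of some oriented $S^{2k-1}$-bundle $\hat\pi \colon \hat{E} \to M$, pick closed representatives $\hat e$ of $\hat{\mc E}$ and $b$ of $\mc B$, and set $f_3 := -b$, which is closed with $(f_3)_0 \neq 0$. Since $h_1 + \hat e\, f_3$ is exact by construction, choose $\alpha$ with $d\alpha = h_1 + \hat e\, f_3$ and put $f_1 := -\alpha$, $f_0 := 0$, $f_2 := 0$, $\hat h_1 := -e\, f_3$, $\hat h_0 := h_0 - e\, f_1$. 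A direct bookkeeping check using $dh_0 = -e\, h_1$ confirms that $\hat H := \hat\pi^* \hat h_0 + \hat\psi\, \hat\pi^* \hat h_1$ is closed and that $F := \psi f_1 + \psi\hat\psi f_3$ satisfies all four coefficient equations, producing a bona fide T-dual.

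The only non-routine point is the realisation of $\hat{\mc E}$ as the Euler class of an oriented sphere bundle in the converse. This is automatic for $k = 1$, where circle bundles realise every integral class in $H^2(M; \Z)$, and is standard in the spherical T-duality literature for higher $k$ (working with sphere fibrations rather than sphere bundles of vector bundles); I would cite this rather than reproduce it, and the remainder of the argument is transgressive bookkeeping.
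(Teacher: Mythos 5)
Your proof is correct and follows essentially the same route as the paper: decompose $H$, $\hat H$ and the kernel $F$ in the transgressive (Gysin) complexes, read off $[\pi_*H] = -[\hat e][f_3]$ from the $\psi$-component of $dF = p^*H - \hat p^*\hat H$ together with $df_3=0$ and the nondegeneracy condition $(f_3)_0 \neq 0$, then reverse the construction for sufficiency. The only differences are organisational: by working directly with the kernel (which lies in $\Omega_{\Psi\cup\hat\Psi}$ by definition) you bypass the paper's auxiliary argument that $(\pi_*\circ p_*F)_0 \neq 0$ holds for \emph{any} solution of the gerbe-trivialization equation, as well as its separate treatment of the case $\hat{\mc{E}}=0$, and, like the paper (which only asks for the Euler class of $\hat E$ to be a constant multiple of $\hat{\mc{E}}$), you defer the realisation of $\hat{\mc{E}}$ as an Euler class to the literature.
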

\begin{proof}
For the necessity, if $\hat{E}$ is T-dual to $E$,  then
\begin{equation}\label{eq:restriction on H}
0 = [\hat{p}_*(dF)] = \hat{p}_*[p^*H - \hat{p}^*\hat{H}] =  \hat{p}_*\circ p^*[H] =   \hat{\pi}^*\circ \pi_*[H].
\end{equation}
Since by the Gysin sequence for $\hat{E}$, the cohomology classes of $M$ which pull back to trivial classes on $\hat{E}$ are multiples of the Euler class of $\hat{E}$ we conclude that if $\hat{E}$ exists,  \eqref{eq:multidegree H condition} holds.

If the Euler class of $\hat{E}$ vanishes, that is $\hat{\mc{E}} =0$, then we can take $\mc{B}=1$ and the claim about the decomposition of $\pi_*H$ holds.

To complete the proof that \eqref{eq:multidegree H condition} is a necessary condition, we need to consider the case when $\hat{\mc{E}} \neq 0$. Let $\psi$ be a global angular form for $E$ so that $d\psi = \pi^*e$ is a representative of the Euler class of $E$ and similarly for $\hat\psi$ on $\hat{E}$. In this case, from the Gysin sequence for the cohomology of $E\times_M \hat{E}$, there is no closed form in the correspondence space which integrates nonzero along the fibers of $E\times_M \hat{E} \to M$.
Therefore, if $F'$ is another form for which $dF' = p^*H -\hat{p}^*\hat{H}$, then $F-F'$ is closed and hence $(\pi_* \circ p_* (F-F'))_0 = 0$. Hence $(\pi_* \circ p_* (F))_0 \neq 0$ for any $F$ such that $dF =  p^*H -\hat{p}^*\hat{H}$. 

Changing $H$ and $H'$ by exact forms we can arrange that they lie in the corresponding Gysin complexes for $E$ and $\hat{E}$ and therefore we can also pick $F$ in the Gysin complex for $E\times_M \hat{E}$. So we have
\begin{equation}\label{eq:H looks like}
H = H_{[1]} +\psi  H_{[0]},
\end{equation}
\begin{equation}\label{eq:F looks like}
F = F_{[0]} +  \psi F_{[1]} +\hat\psi \hat{F}_{[1]} + \psi \hat\psi F_{[2]} ,
\end{equation}
where $H_{[i]}$ and $F_{[i]}$ are multidegree forms pulled back from $M$ whose parity agrees with the partiy of their indices and $(F_{[2]})_0 \neq 0$. Then
\[\hat{\pi}^*H_{[0]} = \hat{p}_* p^* H =\hat{p}_* (p^*H -\hat{p}^* \hat{H}) = \hat{p}_* dF = dF_{[1]} - \hat{e} F_{[2]} + \hat{\psi} dF_{[2]}. \]
Therefore $dF_{[2]}=0$ (in particular $(F_{[2]})_0$ is constant and nonzero) and since both sides are pulled back form $M$, we conclude that on $M$
\[H_{[0]}  = dF_{[1]} - \hat{e} F_{[2]}.\]
Passing to cohomology, we have that $\mc{B} =- [F_{[2]}]$ satisfies the conditions of the theorem. 

For the converse we assume \eqref{eq:multidegree H condition} holds. Let $\psi$ be a global angular form for $E$ so that $d\psi = \pi^*e$ is a representative of the Euler class of $E$. Let $\hat{E}$ be a sphere bundle whose Euler class is a constant multiple of $\hat{\mc{E}}$ (and after scaling and renaming we may assume it to be $\hat{\mc{E}}$).

Let $\hat{\psi}$ be a global angular form for $\hat{E}$ with $d\hat{\psi} = \hat{e}$, a representative for $\hat{\mc{E}}$. Since the inclusion of the Gysin complex is a quasi-isomorphism, after changing $H$ by an exact term (which does not afect the existence of T-duals) we can assume that $H \in \Omega^\bullet_\psi(E)$ and write
\[H = \pi^*H_ {[0]} + \psi \pi^*H_{[1]}.\]
With $H_{[0]}$ and $H_{[1]}$ multidegree forms. Since $[H_{[1]}]$ is a multiple of $[\hat{e}]$, after changing by another exact element we can arrange that
\[H = H_{[0]} + \psi \hat{e} H_{[1]}',\]
Let $\hat{H} = H_{[0]} + \hat{\psi} e H_{[1]}'$, then
\[H - \hat{H} = d(-\psi\hat{\psi} H_{[1]}')\]
We take
\[F = -\psi\hat{\psi} H_{[1]}'\]
 and observe that $[\pi_*\circ p_* F] = [H_{[1]}'] = \mc{B}$. In particular $[(\pi_*\circ p_* F)_0]=\mc{B}_0 \neq 0$.
\end{proof}

\begin{corollary}
If $S^{2n-1}\cdots (E,H) \stackrel{\pi}{\longrightarrow}M$ and $S^{2k-1}\cdots (\hat{E},\hat{H}) \stackrel{\hat{\pi}}{\longrightarrow}M$ are T-dual and
$\pi_*[H] \neq 0$, then the degree of the first nonzero component of $\pi_*[H] \neq 0$ is $2k$.
\end{corollary}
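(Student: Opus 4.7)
The plan is to read this off directly from the previous theorem, treating it as a book-keeping exercise in degrees. By that theorem, since a T-dual of $(E,H)$ exists (namely $\hat{E}$), we have the factorization $[\pi_*H] = \hat{\mathcal{E}}\,\mathcal{B}$, where $\hat{\mathcal{E}} \in H^\bullet(M)$ is the Euler class of the sphere bundle $\hat{E}\to M$ and $\mathcal{B}$ is a multidegree even class with $\mathcal{B}_0 \neq 0$.

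First I would pin down the degree of $\hat{\mathcal{E}}$: since $\hat{E}\to M$ is an oriented $S^{2k-1}$-bundle, its Euler class lives in $H^{2k}(M)$, so $\hat{\mathcal{E}}$ is a homogeneous class of degree exactly $2k$. Then I would use the hypothesis $\pi_*[H]\neq 0$ to deduce that $\hat{\mathcal{E}}\neq 0$; otherwise the product $\hat{\mathcal{E}}\,\mathcal{B}$ would vanish identically, contradicting $\pi_*[H]\neq 0$.

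Finally I would examine the multidegree decomposition of $\hat{\mathcal{E}}\,\mathcal{B}$. Since $\mathcal{B} = \mathcal{B}_0 + \mathcal{B}_2 + \mathcal{B}_4 + \cdots$ with $\mathcal{B}_0$ a nonzero locally constant function on $M$, and $\hat{\mathcal{E}}$ is concentrated in degree $2k$, the degree $j$ component of $\hat{\mathcal{E}}\,\mathcal{B}$ vanishes for $j<2k$ while the degree $2k$ component equals $\mathcal{B}_0\,\hat{\mathcal{E}}$, which is nonzero because $\mathcal{B}_0\neq 0$ and $\hat{\mathcal{E}}\neq 0$. Hence $2k$ is the degree of the first nonzero component of $\pi_*[H]$.

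There is no real obstacle here: the content of the corollary is entirely contained in the previous theorem together with the observation that the Euler class of an oriented $(2k-1)$-sphere bundle is homogeneous of degree $2k$. The only point requiring care is the reverse implication $\pi_*[H]\neq 0 \Rightarrow \hat{\mathcal{E}}\neq 0$, which follows because $\mathcal{B}_0 \neq 0$ makes $\mathcal{B}$ a unit in $H^\bullet(M)$ (its lowest-degree component is invertible), so $\hat{\mathcal{E}}\,\mathcal{B}=0$ forces $\hat{\mathcal{E}}=0$.
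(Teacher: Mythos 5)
Your argument is correct and is exactly the intended deduction: the paper states this corollary without proof as an immediate consequence of the preceding theorem, whose necessity argument identifies $\hat{\mathcal{E}}$ with the Euler class of $\hat{E}\to M$ (hence homogeneous of degree $2k$). Your bookkeeping --- $\mathcal{B}$ is a unit since $\mathcal{B}_0\neq 0$, so $\pi_*[H]\neq 0$ forces $\hat{\mathcal{E}}\neq 0$ and the lowest nonzero component of $\hat{\mathcal{E}}\,\mathcal{B}$ is $\mathcal{B}_0\hat{\mathcal{E}}$ in degree $2k$ --- supplies precisely the details the paper leaves implicit.
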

Stated in a less precise way, the dimension of the dual sphere bundle is determined by the fiber integration of $H$.

\subsection{Transgressive T-duality for frame bundles}

Now we consider the most direct applications of transgressive T-duality to frame bundles, which in many ways mimic the usual T-duality construction but now for non-Abelian gauge groups.
\begin{example}[Constructive T-duality for Hermitian vector bundles I]\label{ex:constructive 1}   Let $V\to M$ be a Hermitian vector bundle of rank $n$ and $E\to M$ be the associated frame bundle. Assume that $H \in \Omega^{2n+1}(E)$ is a closed form. Let $\Psi = \{\psi_1,\dots,\psi_{2n-1}\}$ be a transgressive generating set such that
\[\deg(\psi_{2i-1}) = 2i-1, \qquad d\psi_{2i-1} = c_i,\]
where $[c_i]$ is the $i^{th}$ Chern class of $V$ (for example, take $\psi_{2i-1}$ the forms one would get from Chern--Simons theory).

If we add two further hypothesis we can construct concrete T-duals for $(E,H)$.
\begin{itemize}
\item[(H1)] $H$ formally has only ``one leg along the fibers'', that is, after changing $H$ by an exact element, $H$ is of the form
\[H = \sum_{i=1}^n \hat{\varepsilon}_{2(n-i+1)} \wedge \psi_{2i-1} + h,\]
where each $\hat{\varepsilon}_{2(n-i+1)}$ is a basic form of degree $2(n-i+1)$ and $h$ is a basic form of degree $2n+1$.

Then, since $H$ is closed, each $\hat{\varepsilon}_{2(n-i+1)}$ is a closed form.
\item[(H2)] The classes $[\hat{\varepsilon}_{2(n-i+1)}]$ are nonzero multiples of integral classes.
\end{itemize}
Notice that (H2) holds if, for example, $H$ represents an integral class. Further, it guarantees the existence of a Hermitian vector bundle of rank $n$, $\hat{V} \to M$, whose Chern classes are multiples of $[\hat{\varepsilon}_{2(n-i+1)}]$ \cite{BV03}.

Under these hypothesis, let $\hat{E}\to M$ be the frame bundle of $\hat{V}$ and pick a transgressive generating set, $\hat{\Psi}$, for which $d\hat{\psi}_{2i-1} = \hat{c}_i = \lambda_i \hat \varepsilon_{2i}$ for some $\lambda_i \neq 0$. Endow $\hat{E}$ with the form
\[\hat{H} = \sum_{i=1}^n \frac{1}{\lambda_i}c_{n-i+1} \wedge \hat{\psi}_{2i-1} + h.\]
A direct computation shows that $\hat{H}$ is closed because $H$ is closed.

Then in the fiber product $E \times_M \hat{E}$, take as T-duality kernel
\[F = -\sum_{i=1}^{n}\frac{1}{\lambda_i}\psi_{2(n-i)+1} \wedge \hat{\psi}_{2i-1}.\]
One readily checks that  with these choices the gerbe trivialization condition holds and $F$ is quadratic and nondegenerate (the underlying matrix is diagonal with $\frac{1}{\lambda_i}$ along the diagonal). Therefore $(E,H)$ and $(\hat{E},\hat{H})$ are a T-dual pair.
\end{example}
\begin{remark}
We do no touch $K$-theoretical implications of $T$-duality in this paper, but it is worth mentioning that for those to hold, integrality conditions must be in place. In particular, the expectation is that the framework that will lead to isomorphisms of twisted $K$-theory would be that the form $H$ arises as the curvature of a higher gerbe as introduced in \cite{LSW16}, and the coefficients $\hat \varepsilon_{2i}$ of $H$ represent the Chern classes of a Hermitian vector bundle of rank $n$, that is, we can take $\lambda_i =1$ for all $i$. This is a much more restrictive situation.
\end{remark}

\begin{remark}
In the previous example, the T-dual pair consisted of two principal $U(n)$-bundles over a common base, so even though there was topology change (the Chern classes changed) the type of objects related by T-duality was the same. This does not have to be the case. One obvious change is that one could just as well use the classes $\hat{\varepsilon}_{2i}$ as Euler classes of $S^{2i-1}$-sphere bundles over $M$ and the same computation would lead to T-duality between a frame bundle and a bundle whose fiber is the product of spheres $S^1\times \dots \times S^{2n-1}$. 
\end{remark}

A more natural way to change the type of object the T-dual is arises by considering higher degree forms as twists, as we explain next.

\begin{example}[Constructive T-duality for Hermitian vector bundles II]\label{ex:constructive 2}   In this example we show that by allowing the twisting form to have degree higher than the rank of the underlying vector bundle, one obtains dualities between the full frame bundle of $V\to M$ and a partial frame bundle of a vector bundle of higher rank $\hat{V}\to M$.
  
As before,  let $V\to M$ be a Hermitian vector bundle of rank $n$ and let $E\to M$ be the associated frame bundle. Assume that $H \in \Omega^{2n+2k+1}(E)$ is a closed form and $k$ is a positive integer. Let $\Psi = \{\psi_1,\dots,\psi_{2n-1}\}$ be a transgressive generating set such that
\[\deg(\psi_{2i-1}) = 2i-1, \qquad d\psi_{2i-1} = c_i,\]
where $[c_i]$ is the $i^{th}$ Chern class of $V$.

We place again the same two extra hypotheses (H1) and (H2) on $H$. Notice that because $H$ has degree higher than $2n+1$, the coefficient of lowest degree (the coefficient of $\psi_{2n-1}$) is $\hat\varepsilon_{2(k+1)}$ which has degree greater than 2.  Similarly, the coefficient of highest degree is $\varepsilon_{2(n+k)}$, which has degree higher than the (real) rank of $V$.

Yet, once again because the forms $\hat\varepsilon_{2i}$ represent multiples of integral classes, there is a Hermitian bundle $\hat{V}\to M$ of rank $n+k$ whose Chern classes are nonzero multiples of $[\varepsilon_{2i}]$ (and we allow for the first $k$ Chern classes to be arbitrary).  We let $\hat{E}\to M$ be the partial frame bundle of $n$-unitary vectors on $\hat{V}$. Then $\hat{E}$ is an odd transgressive fibration over $M$ with transgressive generating set $\hat{\Psi} = \{\hat{\psi}_{2k+1},\dots,\hat{\psi}_{2(n+k)-1}\}$ and we can pick the forms $\hat\psi_{2i-1}$ satisfying
\[d\hat\psi_{2i-1} = \hat{c}_i= \lambda_i \hat\varepsilon_{2i}.\]
If we endow $\hat{E}$ with the form
\[\hat{H} = \sum_{i=0}^{n-1} \frac{1}{\lambda_i}c_{n-i} \wedge \hat{\psi}_{2(k+i)-1} + h,\]
then, just as in the previous example, one readily sees that $\hat{H}$ is closed and
\[F = -\sum_{i=0}^{n-1} \frac{1}{\lambda_i}\psi_{2(n-i)-1}\wedge \hat{\psi}_{2(k+i)-1}  \]
fulfils the conditions of a T-duality kernel.
\end{example}

So far we have focused on the question: Given $(E,H)\to M$, can we construct a T-dual bundle $(\hat{E},\hat{H})\to M$. Another question which might be of interest is: Given two Hermitian vector bundles $V\to M$ and $\hat{V}\to M$ is there a T-duality relation between some of their partial frame bundles? We illustrate this next   

\begin{example}\label{ex:which H}
Given two Hermitian vector bundles $V, \hat{V}\to M$ of rank $n$ and $\hat{n}$, assume that there is a quadratic relation between their Chern classes:
\begin{equation}\label{eq:relation}
\sum_{i=0}^{\hat{n}-k} \lambda_{k+i}[c_{n-i}][\hat{c}_{k+i}] =0,
\end{equation}
where $k> \hat{n}-n$ is an integer and $\lambda_i$ are nonzero real numbers\footnote{Notice that the integer $k$ here measures the difference between the degree of the top Chern class of $V$ and the degree of the relation given. Because this is not symmetric on $V$ and $\hat{V}$ the conditions that follow are not symmetric in the most obvious way in $k$.}. Let $E,\hat{E}\to M$ be the partial frame bundles of $(\hat{n}-k+1)$ unitary vectors on $V$ and $\hat{V}$, respectively. Pick transgressive generating sets such that $d\psi_{2i-1} = c_i$,  $d\hat\psi_{2i-1} = \hat{c_i}$. The relation \eqref{eq:relation} at the form level becomes
  \begin{equation}\label{eq:relation2}
\sum_{i=0}^{\hat{n}-k} \lambda_{k+i}c_{n-i}\hat{c}_{k+i} +dh =0,
\end{equation}
for some form $h \in \Omega^\bullet(M)$.

If we endow $E$ with the form $H = \sum_{i=0}^{\hat{n}-k} \lambda_{k+i}\psi_{2(n-i)-1}\hat{c}_{k+i} +h$ and $\hat{E}$ with the form $\hat{H} = \sum_{i=0}^{\hat{n}-k} \lambda_{k+i}c_{n-i}\hat{\psi}_{2(k+i)-1} +h$, we see that both $H$ and $\hat{H}$ are closed by virture of \eqref{eq:relation2} and
\[F = \sum_{i=0}^{\hat{n}-k} -\lambda_{k+i}\psi_{2(n-i)-1}\hat{\psi}_{2(k+i)-1},\]
satisfies the conditions of T-duality kernel.

A particular situation in which the hypotheses of the example are met is when $V$ and $\hat{V}$ have rank $n$ and the base manifold has dimension $2n$. In this case, purely for degree reasons, equation \eqref{eq:relation} holds for all $k>0$ and hence the (partial) frame bundles of $V$ and $\hat{V}$ are T-dual.  More generally the same happens when the degree of the relation \eqref{eq:relation} is higher than the dimension of the base manifold. Interestingly, in this situation $H$ and $\hat{H}$ may represent nontrivial cohomology classes and the exterior derivative of the $\psi_i$ can be nontrivial in cohomology.
\end{example}

\subsection{Transgressive T-duality for frame bundles with multidegree background form}

A feature of Examples \ref{ex:constructive 1} and \ref{ex:constructive 2} is that the coefficient in $H$ of the high degree transgressive elements became the lower degree Chern classes of the dual bundle. Once we allow for multi-degree forms as twists, this flipping of degrees may not occur. 

\begin{example}For an extreme case, assume that $V$ and $\hat{V}\to M$ are Hermitian vector bundles of rank $n$ and assume we have a collection of relations
\begin{equation}\label{eq:cohomological relations}
[c_i][\hat{c}_i] = 0, \quad \mbox{ for } i = n-k+1,\dots, n.
\end{equation}
Such relations might be a result of specific behaviour of the cohomology algebra of $M$ or might exist simply because the degrees of the forms in question are greater than the dimension of the manifold. Following the same lines of Example \ref{ex:which H} we pick transgressive elements of the partial unitary $k$-frame bundle of $V$ and $\hat{V}$ for which $d\psi_{2i-1} = c_i$ and write \eqref{eq:cohomological relations} at the cochain level:
\begin{equation}
c_i \hat{c}_i + dh_{4i-1} =0,\quad \mbox{ for }  i = n-k+1,\dots, n.
\end{equation}
Then let
\[H = \sum\hat{c}_i\psi_{2i-1} + h_{4i-1}, \quad \hat{H} = \sum c_i\hat\psi_{2i-1} + h_{4i-1}, \quad F = \sum -\psi_{2i-1}\hat{\psi}_{2i-1}.\]
For these choices, $(E,H)$ and $(\hat{E},\hat{H})$ are a T-dual pair with T-duality kernel $F$.

In this example, $H$ has components in degree from $2(n-k)+1$ up to $4n-1$ and the coefficients of the lower degree transgressive classes determines the lower Chern classes of the dual bundle.
\end{example}

\subsection{Non-quadratic T-duality kernel}

In all examples we considered so far, the T-duality kernel was quadratic on the transgressive generating set. For the original version of T-duality \cite{BEM04} in which the  twisting form $H$ has degree three, the T-duality kernel  has degree two and hence kernels are automatically quadratic. As a result, we often navigate towards the use of quadratic kernels.

\begin{example}[The usual version]
Let $E\to M$ be a principal $T^4$ bundle endowed with the form $H = 0 \in \Omega^5(E)$. Since $H$ vanishes, it satisfies both the hypothesis (H1) and (H2) introduced earlier (i.e. ``only one leg on the fiber'' and the different fiber integrations are multiples of an integral classes). Following the usual construction of spherical T-duals, we would be led to construct the T-dual space $\hat{E} = S^3 \times S^3 \times S^3 \times S^3 \times M$ endowed with the 5-form
\[\hat{H} = c_1 \hat{\psi}_1+c_2 \hat{\psi}_2+c_3 \hat{\psi}_3+c_4 \hat{\psi}_4,\]
where $c_i$ is the first Chern class associated to the $i^{th}$ circle in $T^4 = S^1 \times S^1\times S^1\times S^1$  and $\hat{\psi}_i$ is a closed form on $\hat{E}$ corresponding to the volume form of the $i^{th}$ $S^3$ factor.

In particular, this prescription for the construction of a T-dual space automatically generates a T-dual whose dimension is higher than the original space and does the usual exchange of cohomological content of the twisting form $H$ and the characteristic classes of $E$, leading to a trivial bundle $\hat{E}\to M$.
\end{example} 

\begin{example}[The unusual version]\label{ex:identitiy?}
Keeping the same space $(E,H)$ as before, we claim that $(E,H)$ is also self-T-dual. Indeed, since the twists vanish on either side, we are looking for a closed, degree four T-duality kernel. We take
\[F = (\psi_1-\hat{\psi}_1)\wedge(\psi_2-\hat{\psi}_2)\wedge(\psi_3-\hat{\psi}_3)\wedge(\psi_4-\hat{\psi}_4),\]
where $\psi_i$ and $\hat{\psi}_i$ are the global angular forms corresponding to the $i^{th}$ circle in both copies of $E$. Since by choice $d\psi_i = d\hat\psi_i = c_i$, the form $F$ above is closed and  from a direct computation we have:
\begin{align*}
 \tau_F (\psi_{1234}) &= 1 + \hat{\psi}_{1234}\\
 \tau_F (\psi_I)&= \hat\psi_I, \quad \quad\quad\mbox{ if } I \neq \{1,2,3,4\}.
 \end{align*}
which shows that $\tau_F$ is invertible, hence the nondegeneracy condition is satisfied. 

We can also describe the T-duality map of Courant algebroids. A simple way to get an expression is by noting that if we ignore the distinction between $\psi$ and $\hat\psi$ for a bit, then $\tau_F$ can be described by a single formula:
\[ \tau_F (\gf)= (1 + \del_{\psi_{1234}})\gf.\]
Therefore, for $v \in C_{\IP{\Psi}}$
\begin{align*}
 \tau_F (v\cdot\gf)&= (1 + \del_{\psi_{1234}})v \cdot \gf\\
 &=  (1 + \del_{\psi_{1234}})v (1 -\del_{\psi_{1234}})(1 + \del_{\psi_{1234}})\cdot \gf\\
 & =  (1 + \del_{\psi_{1234}})v (1 -\del_{\psi_{1234}}) \cdot \tau_F(\gf).
 \end{align*}
So the map of Clifford--Courant algebroids is 
\[\mc{T}_F(v) =  (1 +\del_{\psi_{1234}})v (1  -\del_{\psi_{1234}}).\] 
\end{example}

\bibliographystyle{hyperamsplain-nodash}
\bibliography{references}

\end{document}